\documentclass[11pt,reqno]{amsart}
\usepackage[T1]{fontenc}
\usepackage{lmodern,microtype,amsmath,amssymb,amsthm,mathtools,booktabs,longtable,array}
\usepackage[margin=1in]{geometry}
\usepackage{graphicx}
\usepackage{float}
\usepackage[hidelinks]{hyperref}
\hypersetup{pdftitle={Spectral extremal 1-planar graphs with bounded pentagon packing},pdfauthor={Zhanhe Zhang},pdfsubject={Spectral extremal graph theory, 1-planar graphs, and vertex-disjoint cycles}}
\newtheorem{theorem}{Theorem}[section]
\newtheorem{lemma}[theorem]{Lemma}
\newtheorem{proposition}[theorem]{Proposition}
\newtheorem{corollary}[theorem]{Corollary}
\theoremstyle{remark}\newtheorem{remark}[theorem]{Remark}
\newcommand{\one}{\mathbf 1}
\newcommand{\nf}{\nu_5}
\newcommand{\SPEX}{\operatorname{SPEX}_{\mathcal P_1}}
\title[Spectral extremal 1-planar graphs]{Spectral extremal 1-planar graphs with bounded pentagon packing}
\author{Zhanhe Zhang}
\address{School of International Trade and Economics, Central University of Finance and Economics, Beijing 102206, China}
\email{2024311337@email.cufe.edu.cn}
\date{}
\subjclass[2020]{05C10, 05C35, 05C50}
\keywords{1-planar graph, spectral extremal graph, spectral Tur\'an problem, vertex-disjoint cycles, pentagon packing, adjacency spectral radius}
\begin{document}
\raggedbottom
\begin{abstract}
Let $\mathcal P_1$ denote the class of 1-planar graphs and let $tC_5$ be the disjoint union of $t$ copies of $C_5$. For every fixed $t\ge3$ and all sufficiently large $n$, we determine the unique $n$-vertex $tC_5$-free graph in $\mathcal P_1$ with maximum adjacency spectral radius, answering Problem~1 of Li, Wang and Zhao. The proof first gives a structural description of every extremizer. After two dominating vertices are removed, the remainder consists of copies of the seven-vertex graph $B=K_1\vee2K_3$ together with at most one bounded connected core. This follows from a two-family covering theorem for 1-planar $K_2$-joins and the sharp packing--defect inequality
\[
 12v(Q)-7e(Q)\ge1-10\nu_5(Q).
\]
The same inequality yields an exact edge-extremal result for remainders with bounded pentagon packing. A normalized resolvent then cancels the repeated $B$-components, and finite moment comparisons force all packing and all nonzero defect into one core and identify that core uniquely in each residue class modulo $7$. The case $t=3$ is the pentagon-free boundary case and is completed by one exact finite component lemma.
\end{abstract}
\maketitle

\section{Introduction}\label{sec:intro}
A graph is \emph{1-planar} if it can be drawn in the plane so that every edge is crossed at most once. The class was introduced by Ringel~\cite{Ringel} and lies immediately beyond planar graphs in the hierarchy of sparse topological graph classes. Spectral extremal questions for planar graphs go back to the conjecture of Boots and Royle~\cite{BR} and, independently, Cao and Vince~\cite{CV}; Tait and Tobin~\cite{TT} proved that, for sufficiently large order, the unique planar graph of maximum adjacency spectral radius is $K_2\vee P_{n-2}$. Spectral extremal problems for 1-planar graphs were subsequently studied by Zhang, Wang and Wang~\cite{ZWW}, and more recent work has considered spectral Tur\'an problems inside the 1-planar class~\cite{XC,LWZ}.

Vertex-disjoint cycles form a second line of extremal problems, beginning with the classical packing theory of Erd\H{o}s and P\'osa~\cite{EP}. For planar graphs, exact and asymptotic Tur\'an questions for short cycles and disjoint cycles have been studied, among others, by Dowden~\cite{Dowden}, Cranston, Lidick\'y, Liu and Shantanam~\cite{CLLS}, and Li~\cite{Li}. Fang, Lin and Shi~\cite{FLS} combined the spectral and packing viewpoints: using the theorem of Tait and Tobin, they observed that for planar graphs the spectral problem forbidding $t$ vertex-disjoint cycles stabilizes for $t\ge3$, because the unrestricted extremizer $K_2\vee P_{n-2}$ already remains extremal under those constraints.

The 1-planar problem behaves differently. Li, Wang and Zhao~\cite{LWZ} determined the unique spectral extremizer when one forbids a single $C_5$ or two vertex-disjoint copies of $C_5$, and asked for the fixed-$t$ problem for every $t\ge3$~\cite[Problem~1]{LWZ}. Once their two-pole reduction is available, the remainder is no longer forced to be pentagon-free. In the fixed-$t$ setting considered here, Proposition~\ref{prop:reduction} shows that it may carry up to $t-3$ vertex-disjoint pentagons. These packing units can, a priori, be distributed among several connected components. The fixed-$t$ problem therefore requires determining how this packing is arranged across the connected components of an extremal remainder.

We determine the extremal graph for every fixed $t\ge3$ and all sufficiently large $n$. The first result describes the structure of the remainder. Put
\[
 B:=K_1\vee2K_3,
 \qquad
 \delta(Q):=12v(Q)-7e(Q).
\]
The graph $B$ has seven vertices and twelve edges, so $\delta(B)=0$. We prove that it is the unique connected pentagon-free remainder with a 1-planar $K_2$-join and zero defect. More generally, if $p=\nu_5(Q)\ge1$, then every connected remainder with a 1-planar $K_2$-join satisfies the sharp inequality
\[
 \delta(Q)\ge1-10p.
\]
The zero-defect block can therefore repeat without using the bounded defect budget, whereas every component carrying pentagon packing changes that budget.

For $t\ge4$ write $k=t-3$. Set
\[
 N_0(t):=\max\{10^{19},250t,[10^7(t+11)]^2\}.
\]
For $N=n-2$, let $E\in\{0,\ldots,6\}$ be the unique integer for which
\[
 D^*:=1-10k+E\equiv5N\pmod7.
\]
We use $\SPEX(n,F)$ for the set of $n$-vertex $F$-free 1-planar graphs of maximum adjacency spectral radius.

\begin{theorem}\label{thm:structure}
Fix $t\ge4$ and put $k=t-3$. For every $n\ge N_0(t)$, every graph $G\in\SPEX(n,tC_5)$ has the form
\[
 G=K_2\vee(qB\cup Q)
\]
for some $q\ge0$ and a single connected graph $Q$ satisfying
\[
 \nu_5(Q)=k,\qquad \delta(Q)=D^*=1-10k+E,
\]
and
\[
 v(Q)\le5k+10+3E\le5k+28.
\]
In particular, $Q$ is the only non-$B$ component of the remainder.
\end{theorem}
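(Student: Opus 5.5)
The plan has four steps: reduce $G$ to a two-pole join, pin down the edge count of the remainder, cancel the repeated $B$-blocks spectrally, and then compare finite moments to merge the components.

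\textbf{Step 1: reduction.} Apply Proposition~\ref{prop:reduction}, the two-pole reduction in the style of Li--Wang--Zhao. For $n\ge N_0(t)$ it writes every $G\in\SPEX(n,tC_5)$ as $K_2\vee H$, where $H$ has $N=n-2$ vertices. Since $K_2\vee H$ is 1-planar, every component $Q_i$ of $H$ has a 1-planar $K_2$-join. The two poles together with the remainder can absorb at most three disjoint pentagons, so the constraint becomes $\sum_i\nu_5(Q_i)\le k$. Spectral maximality then forces equality: otherwise a local edge addition creating one more pentagon would increase $\rho$.

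\textbf{Step 2: edge count.} Sum the packing--defect inequality $\delta(Q_i)\ge1-10\nu_5(Q_i)$ over the components that carry packing. Use $\delta\ge0$ for pentagon-free components, with equality only for $B$ by the covering/classification theorem stated before. This gives
\[
12N-7e(H)=\sum_i\delta(Q_i)\ge m-10k,
\]
where $m$ is the number of packing components. Because $e(H)$ is an integer, $\sum\delta\equiv 12N\equiv5N\pmod 7$. The edge-extremal result for bounded packing shows that $e(H)$ is maximal in the extremizer up to $O(1)$. The target is to show the total defect equals $D^*$, which is the smallest value $\ge 1-10k$ in the right residue class, and that it all sits in one component.

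\textbf{Step 3: normalized resolvent.} Write $\rho=\rho(G)$. For $K_2\vee H$, the eigenvalue equation reduces to a scalar equation involving
\[
\sum_i \mathbf 1^{T}(\rho I-A(Q_i))^{-1}\mathbf 1 .
\]
Expand each term in moments, $\sum_{j\ge0}w_j(Q_i)/\rho^{j+1}$, where $w_j$ counts walks. Normalize by $v(Q_i)$. A $B$-component contributes a fixed profile per vertex, so replacing any set of vertices by copies of $B$ changes $\rho$ in a controlled way: the leading contribution is proportional to $2e$, i.e.\ to $-\delta$ up to the constant $12/7$ per vertex, and the next-order terms are governed by $w_2,w_3$ (degree squares, triangles).

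\textbf{Step 4: moment comparisons.} First-order comparison shows that total defect must be minimized, so $\sum\delta=D^*$ exactly. This is possible only if every defect-free piece is $B$, and at most $6$ units of slack $E$ beyond the lower bound are used.

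Next, to show $m=1$: if two components $Q_1,Q_2$ carry packing, then $\delta(Q_1)+\delta(Q_2)\ge 2-10(p_1+p_2)$, which is one unit worse than merging. Constructing a single core with packing $p_1+p_2$ and defect one smaller, then padding with $B$'s to fix the residue, gives a strictly larger first-order term. The same argument pushes any nonzero defect from a pentagon-free non-$B$ component into the core.

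Finally, $v(Q)\le 5k+10+3E$. A core with packing $k$ needs about $5k$ vertices, and each extra defect unit costs at most $3$ vertices in an optimal core; otherwise, splitting off a $B$ is spectrally better at second order. I would prove this by showing that a larger core contains a removable $B$-like block whose replacement by a genuine $B$ component does not decrease the moment sum.

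\textbf{Main obstacle.} The hardest part is making the second-order comparisons rigorous. First-order edge counts tie across many configurations, so the gap must come from $w_2$ and $w_3$ differences, and these must dominate the error terms of size $O(\rho^{-4}\cdot\text{const})$. This is exactly where the threshold $n\ge[10^7(t+11)]^2$ is needed: $\rho\approx\sqrt{2N}$ must beat constants polynomial in $t$. I would first try to bound $|w_j(Q)|\le (v(Q)\Delta)^j$ with $v(Q)\le 5k+28$ and sum the geometric tail.
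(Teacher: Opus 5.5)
Your Steps 1--3 and the first half of Step 4 (forcing the total defect down to $D^*$ by comparison with a $B$-padded core) match the paper's Theorem~\ref{thm:kernel}. The argument that merges components, however, has a genuine gap.

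\textbf{Why the first-order merge fails.} Once $D(H)=D^*$ is established, $e(H)=(12N-D^*)/7$ is fixed. Every arrangement allowed by the budget identity $E=(c-1)+\sum_i\varepsilon_i+\sum_j d_j$ therefore has exactly the same first-order term as a single core of excess $E$. This includes two packing components with $\varepsilon_1+\varepsilon_2=E-1$, and a core together with pentagon-free pieces of defect $d_j$. The ``single core with defect one smaller'' you propose does not exist on $N$ vertices. One has $\delta(Q)\equiv5v(Q)\pmod 7$, and padding with copies of $B$ (defect $0$, order $7$) never changes this residue. A remainder of defect $D^*-1$ would also contradict the minimality of $D^*$ from Corollary~\ref{cor:edge}. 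Comparing the lower bounds $2-10k$ and $1-10k$ says nothing once the actual defect is pinned at $D^*$.

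For a concrete tie, take $k=2$, $E=1$. Then $R_{15}^{(2)}\cup R_{15}^{(2)}$ and $B\cup C_{2,1}$ have the same order, the same defect $-18$ and the same edge count. They differ only at $\Psi_2$, with values $128$ versus $142$. Your own ``main obstacle'' paragraph concedes that first-order terms tie, which contradicts the strict first-order gain claimed in Step 4. The same objection applies to pushing pentagon-free defect into the core.

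\textbf{What is missing.} You need a per-component second-moment penalty that is strictly superadditive. The paper supplies two bounds:
\begin{itemize}
\item $\Psi_2(Q)\le110p-c_\varepsilon$ for a packing component of excess $\varepsilon$, with equality for $C_{p,\varepsilon}$ (Theorem~\ref{thm:local}). This needs the deletion-set analysis in $A_m$, the book and neighbourhood obstructions, and the wrapped competitors.
\item $\Psi_2(R)\le-b_d$ for a pentagon-free non-$B$ component of defect $d$ (Lemma~\ref{lem:finitecomponents}, by exact enumeration).
\end{itemize}
The inequalities \eqref{eq:merge} and \eqref{eq:absorb}, whose gaps are at least $14$, then give $\Psi_2(H)<\Psi_2(q_*B\cup C_{k,E})$ unless $c=1$ and $s=0$. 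The resolvent tail converts this into a spectral gap. Nothing in your proposal replaces these penalty bounds, and they are the bulk of the proof.

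\textbf{Smaller points.}
\begin{itemize}
\item The tail estimate must use a bound on the whole non-$B$ part that holds before condensation, namely $v(F)\le5k+70$ via \eqref{eq:mass}. Using $v(Q)\le5k+28$ is circular, since that is the conclusion.
\item $\nu_5=k$ follows from $D=D^*$, not from adding an edge, which may destroy 1-planarity. If $P\le k-1$, then $D\ge11-10k>D^*$ because $E\le6$.
\item The size bound needs no spectral ``split off a $B$'' argument. With $p=k$ and $\varepsilon=E$ it is part~(\ref{item:defect-size}) of Theorem~\ref{thm:defect}, i.e.\ $m\le5p+10+3\varepsilon$.
\end{itemize}
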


Thus all but $O_t(1)$ vertices lie in copies of one seven-vertex block, while all admissible pentagon packing is localized in one bounded component. After the minimum defect is fixed, normalized spectral moments strictly penalize splitting positive packing between components or leaving additional pentagon-free defect outside the packing component.

The bounded core can then be identified explicitly. On vertices $q_1,\ldots,q_m$, let $P_m^2$ be the square of the path and put
\[
 A_m=P_m^2+q_1q_4+q_{m-3}q_m,
 \qquad
 R_m^{(1)}=P_m^2+q_1q_4-q_3q_5,
\]
\[
 R_m^{(2)}=A_m-q_3q_5-q_{m-4}q_{m-2}.
\]
Set $M=5k+10=5t-5$ and define
\begin{center}\small
\begin{tabular}{ccl}
\toprule
$E$ & order & $C_{k,E}$ \\
\midrule
0 & $M$   & $R_M^{(2)}$ \\
1 & $M+3$ & $R_{M+3}^{(2)}-q_6q_8$ \\
2 & $M-1$ & $R_{M-1}^{(2)}$ \\
3 & $M+2$ & $R_{M+2}^{(2)}-q_5q_7$ \\
4 & $M-2$ & $R_{M-2}^{(2)}$ \\
5 & $M+1$ & $R_{M+1}^{(2)}-q_5q_7$ \\
6 & $M-3$ & $R_{M-3}^{(1)}$ \\
\bottomrule
\end{tabular}
\end{center}

\begin{theorem}\label{thm:main}
Under the hypotheses and notation of Theorem~\ref{thm:structure}, let $m_E=v(C_{k,E})$ and $q=(N-m_E)/7$. Then
\[
 \SPEX(n,tC_5)=\bigl\{K_2\vee(qB\cup C_{k,E})\bigr\}.
\]
\end{theorem}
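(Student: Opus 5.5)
Theorem~\ref{thm:structure} already reduces everything to one bounded connected core: every $G\in\SPEX(n,tC_5)$ is $K_2\vee(qB\cup Q)$ with $Q$ connected, $\nu_5(Q)=k$, $\delta(Q)=D^*$ and $v(Q)\le 5k+10+3E$. Two tasks remain. First, show that among admissible cores the spectral radius is maximized by $C_{k,E}$ and only by it. Second, check that $K_2\vee(qB\cup C_{k,E})$ is itself $1$-planar and $tC_5$-free. The second task is the easier one. I will draw $C_{k,E}$, a near square of a path, as a chain of triangles with the extra chords crossed once. I then place the two poles on either side of the chain, as in the Li--Wang--Zhao construction. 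For the packing bound, a $C_5$ in the join uses at most two pole vertices, so at most $2$ disjoint pentagons meet the poles. Each $B$ component adds nothing beyond what it can share with a pole. Counting gives $\nu_5\le k+2 = t-1$, and I will verify this by direct inspection of $\nu_5(C_{k,E})=k$.

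For the optimization I will reuse the normalized-resolvent framework that underlies Theorem~\ref{thm:structure}. Write $\rho=\rho(G)$. The Perron equation for $K_2\vee H$ with $H=qB\cup Q$ reduces the comparison of two candidates with the same $N$ to comparing
\[
\Phi_H(\rho)=\one^{T}(\rho I-A(H))^{-1}\one=\sum_{j\ge0}\frac{W_j(H)}{\rho^{j+1}},
\]
where $W_j$ counts walks of length $j$. The two candidates have the same $v(Q)$ modulo $7$, since $v(Q)\equiv N \pmod 7$, so I can pad with copies of $B$ to equalize the number of vertices. After this padding the $B$ contributions cancel exactly. The comparison then reduces to $\Phi_{Q}-\Phi_{Q'}$ plus the known $\Phi_B$ terms times the difference in the number of $B$ copies. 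Because $\rho\sim\sqrt{2N}$ is huge, this difference is decided by the first index $j$ at which the padded walk counts differ.

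The key steps, in order, are as follows.
\begin{enumerate}
\item Since $\delta(Q)=D^*$ is fixed, $e(Q)$ is determined by $v(Q)$, namely $7e(Q)=12v(Q)-D^*$. The defining congruence $D^*\equiv 5N$ forces $v(Q)\equiv N\pmod 7$. Within the allowed range $v(Q)\le 5k+10+3E$, this leaves only finitely many orders. I will check that the $W_1$ term, which equals $2e$ per vertex normalized against $B$ (where $2e/v=24/7$), is equal across these orders. So $W_1$ does not separate candidates; it only fixes $e$.
\item Compare $W_2=\sum_v d(v)^2$. Maximizing the degree-square sum at fixed $v,e$ under the constraints ``$K_2\vee Q$ is $1$-planar'' and ``$\nu_5(Q)=k$'' should force a near-regular structure of degree $4$. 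This is where $P_m^2$ enters, with degree-$3$ ends patched by $q_1q_4$ and $q_{m-3}q_m$. It should also fix the order $m_E$ given in the table.
\item Break the remaining ties by $W_3$, which counts triangles together with the path terms, and if needed by $W_4$. The goal is to decide which chords are deleted, namely $q_3q_5$, $q_{m-4}q_{m-2}$, and $q_6q_8$ or $q_5q_7$. Each of these deletions lowers $\nu_5$ or the edge count in exactly the required way.
\end{enumerate}

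I expect step~(2) to be the main obstacle. It requires a classification of the connected graphs $Q$ with bounded order, $\nu_5(Q)=k$ and the prescribed defect whose $K_2$-join is $1$-planar. The packing--defect inequality is sharp, and its equality cases will be used to force this classification. My first attempt is an induction on $k$. I will show that any extremal core has a pendant pentagon-carrying segment of five vertices. Deleting it reduces $k$ by one and changes $\delta$ by exactly $10$, which is consistent with the inequality $\delta(Q)\ge 1-10\nu_5(Q)$. I then compare moments segment by segment, with the end configurations handled by a finite case check over the seven residues $E$. For $t=3$ the paper's exact finite component lemma plays this role, and I will treat it as the base case.
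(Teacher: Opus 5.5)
\textbf{There is a genuine gap: the classification of competing cores is asserted but not proved, and your proposed induction cannot supply it.}

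Your outer framework matches the paper. Theorem~\ref{thm:structure} leaves only the core free. Padding by copies of $B$ cancels the bulk in the normalized resolvent. Bounded cores together with $\rho>\sqrt{2n}$ control the tail. Your bound $\nu_5\le k+2$ for $K_2\vee(qB\cup C_{k,E})$ is also correct.

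The step that carries the theorem is the paper's Theorem~\ref{thm:local}. It says that among \emph{all} connected $Q$ with a 1-planar $K_2$-join, $\nu_5(Q)=k$ and excess $E$, the vector $(\Psi_2,\dots,\Psi_5)$, with $\Psi_j(Q)=7W_j(Q)-v(Q)W_j(B)$, is lexicographically maximized only by $C_{k,E}$. Orders within one residue class differ by multiples of $7$, so you must compare $\Psi_2$, not $W_2$ at fixed $v,e$.

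Your induction on $k$ does not deliver this, for two reasons.
\begin{enumerate}
\item You would need a removable five-vertex segment in every competitor, not only in an extremal core.
\item More seriously, inserting a segment does not preserve the ranking. For $E=5$, the $t=3$ lemma ranks $K_4=P_4^2+q_1q_4$ below $Q_{11}$ by $14$ in $\Psi_2$. Yet their five-vertex extensions $W_9$ and $C_{1,5}$ agree in $\Psi_2$, $\Psi_3$ and $\Psi_4$.
\end{enumerate}
This is not confined to small $k$. For every $k\ge1$ the wrapped graph $W_{5k+4}=P^2_{5k+4}+q_1q_{5k+4}$ is connected and has a 1-planar $K_2$-join. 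It satisfies $\nu_5=k$, $\delta=6-10k$ and $v=5k+4\le 5k+25$, so it has every property of $Q$ listed in Theorem~\ref{thm:structure} when $E=5$. By Lemma~\ref{lem:fifth}, $W_{5k+4}\cup B$ and $C_{k,5}$ have the same $W_0,\dots,W_4$. They first differ at $W_5$, by $16$ (by $18$ if $k=1$). Consequently:
\begin{itemize}
\item $W_2$ does not fix the order $m_E$;
\item $W_3$ and $W_4$ do not break all ties;
\item a base case that certifies only $(\Psi_2,\Psi_3)$, in the pentagon-free setting where wrapped envelopes of order at least seven are excluded, cannot see this competitor.
\end{itemize}
The equality case of Theorem~\ref{thm:defect}, which you also invoke, settles only $E=0$ (Corollary~\ref{cor:zero}).

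What you need is a classification that is uniform in $k$. The paper obtains one without induction.
\begin{enumerate}
\item By Theorem~\ref{thm:cover}, every competitor is $A_m-F$ or $W_m-F$. Put $h=|F|$ and let $u$ be the number of destroyed canonical windows.
\item The defect identity $m=5p+3+(7h-\varepsilon)/2$, the bound $u\ge(7h-\varepsilon-2)/2$ and Lemma~\ref{lem:windowmoment} give $\Psi_2\le110p+24-35h+3\varepsilon$. This leaves only $h\in\{2,3\}$ and $(\varepsilon,h)=(6,4)$ in the augmented envelope, and $h\le\varepsilon-5$ in the wrapped one.
\item The few surviving deletions are settled by a finite local analysis at the two end books. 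That analysis uses the three-page-book obstruction, the $K_1\vee P_5$ obstruction, and the packing count.
\item Higher moments break the remaining ties: $W_3$ decides $\varepsilon=6$; $W_4$ locates the middle chord when $\varepsilon=1$ (Lemma~\ref{lem:fourth}); $W_5$ eliminates $W_{5k+4}$.
\end{enumerate}

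A smaller point concerns your drawing. The end chords $q_1q_4$ and $q_{m-3}q_m$ cannot be drawn with a single crossing in the double-fan layout. The $K_4$ caps therefore need separate $K_6$ drawings glued along a facial triangle, as in Section~\ref{sec:constructions}.
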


The packing--defect inequality also gives a purely extremal consequence, independent of the spectral comparison.

\begin{corollary}\label{cor:edge}
For $k\ge1$ and $N\ge5k+13$,
\[
 \max_{\substack{v(H)=N,\ K_2\vee H\ \mathrm{1\!\!-planar}\\ \nu_5(H)\le k}} e(H)
 =\left\lfloor\frac{12N+10k-1}{7}\right\rfloor.
\]
For $k=0$ and $N\ge13$, the corresponding maximum is $\lfloor12N/7\rfloor$.
\end{corollary}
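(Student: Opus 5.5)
The statement to prove is Corollary~\ref{cor:edge}. The plan has two halves: an upper bound read off from the packing--defect inequality, applied component by component, and a matching construction built from copies of $B$ and one core.

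\emph{Upper bound.} Let $H$ have $N$ vertices, $\nu_5(H)\le k$, and $K_2\vee H$ 1-planar. Write its connected components as $H_1,\dots,H_r$ and put $p_i=\nu_5(H_i)$, so that $\sum p_i=\nu_5(H)\le k$. Each $K_2\vee H_i$ is a subgraph of $K_2\vee H$, hence 1-planar.
\begin{itemize}
\item For a component with $p_i\ge1$, the inequality $\delta(H_i)\ge 1-10p_i$ stated in the introduction applies.
\item For a pentagon-free component, I would use the zero-defect statement: $\delta(H_i)\ge0$ holds for every connected pentagon-free remainder with a 1-planar $K_2$-join. This is presumably the $p=0$ companion of the inequality, with equality only for $B$. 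If the paper only gives $\delta\ge0$ with uniqueness of $B$ at $0$, that is all we need.
\end{itemize}
Summing over components, $\delta(H)=\sum_i\delta(H_i)\ge\sum_{p_i\ge1}(1-10p_i)$. The sum on the right is at least $1-10k$ whenever some $p_i\ge1$: each such component contributes at least $1-10p_i$, the extra $+1$'s only help, and $\sum p_i\le k$. If no $p_i\ge1$, the bound $\delta(H)\ge0\ge1-10k$ still holds for $k\ge1$. Hence $7e(H)\le 12N+10k-1$, and since $e(H)$ is an integer, $e(H)\le\lfloor(12N+10k-1)/7\rfloor$. For $k=0$ we get $\delta(H)\ge0$, so $e(H)\le\lfloor 12N/7\rfloor$.

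\emph{Lower bound.} I would realize the bound with $H=qB\cup C$, taking $C=C_{k,E}$ from the table with the residue $E$ chosen as in the setup. Those cores satisfy $\nu_5=k$ and $\delta=1-10k+E$, with $0\le E\le6$. This is exactly the minimal defect compatible with the residue of $12N$ modulo $7$, so $e(H)=(12N-\delta(H))/7=\lfloor(12N+10k-1)/7\rfloor$. The constraint $N\ge5k+13=M+3$ is precisely what makes $N\ge m_E$ for every $E$; the largest order in the table is $M+3$. Then $q=(N-m_E)/7\ge0$ is an integer by the congruence, since $\delta(B)=0$ and $v(B)=7$.

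Three properties of this construction need checking.
\begin{itemize}
\item $K_2\vee(qB\cup C)$ is 1-planar. Draw each piece of the join separately between the two poles and concatenate the drawings around the poles. This step needs 1-planar drawings of $K_2\vee B$ and $K_2\vee C_{k,E}$ in which both poles lie on the outer face; I expect these to be given or easy from the path-square structure.
\item $\nu_5(H)=\nu_5(C)=k$, because $B$ is pentagon-free and the other pieces are separate components.
\item The edge count matches the formula.
\end{itemize}
For $k=0$, I would use $qB$ together with a small pentagon-free remainder of defect equal to the residue of $12N$ modulo $7$, available for $N\ge13$. This requires exhibiting, for each residue $r\in\{0,\dots,6\}$, a connected pentagon-free $Q_r$ with $\delta(Q_r)=r$, $v(Q_r)\le13$ in the appropriate residue class, and a 1-planar $K_2$-join. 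Short path-square-like graphs avoiding $C_5$, or unions of triangles and edges, should work.

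\emph{Main obstacle.} The upper bound is mechanical once the packing--defect inequality is available. The real work is the constructions: verifying that each table core has the claimed $\nu_5$, defect and 1-planar join, and supplying the $k=0$ small cores at the threshold $N=13$. Here I would first try to take the paper's own verification of the $C_{k,E}$ parameters, and settle the $k=0$ cases by explicit small examples.
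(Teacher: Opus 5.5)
Your proposal is correct and is essentially the paper's proof. The upper bound sums the componentwise bounds of Theorem~\ref{thm:defect}: positive-packing components give $\delta\ge1-10p_i$ and pentagon-free ones give $\delta\ge0$. The lower bound pads the core $C_{k,E}$ with copies of $B$, and the congruence $\delta\equiv5v\pmod 7$ makes $(N-m_E)/7$ an integer.

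For $k=0$, the small cores you leave open are the paper's block chains $Q_8,\dots,Q_{13}$ from Section~\ref{sec:constructions}, with defects $5,3,1,6,4,2$. Your fallback of unions of triangles and edges cannot work: those components have defects $15$ and $17$, and in fact no pentagon-free graph of order at most six has defect below $6$.
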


The case $t=3$ is a boundary case because the remainder is pentagon-free. Put
\[
 Q_8=R_8^{(2)},\quad Q_9=R_9^{(2)},\quad Q_{10}=R_{10}^{(2)},
\]
\[
 Q_{11}=R_{11}^{(2)}-q_5q_7,\quad
 Q_{12}=R_{12}^{(2)}-q_5q_7,\quad
 Q_{13}=R_{13}^{(2)}-q_6q_8.
\]
\begin{theorem}\label{thm:t3main}
Let $n\ge10^{19}$ and write $N=n-2=7q+r$, $0\le r\le6$. Then the unique graph in $\SPEX(n,3C_5)$ is $K_2\vee H_N$, where
\[
 H_N=
 \begin{cases}
 qB,&r=0,\\
 (q-1)B\cup Q_{7+r},&1\le r\le6.
 \end{cases}
\]
\end{theorem}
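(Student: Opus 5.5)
We prove Theorem~\ref{thm:t3main} along the same lines as Theorem~\ref{thm:structure} and Theorem~\ref{thm:main}, now with packing budget $k=0$.

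\textbf{Step 1: two-pole reduction.} We first invoke the reduction of Proposition~\ref{prop:reduction} with $t=3$. Once $n\ge10^{19}$, every $G\in\SPEX(n,3C_5)$ has the form $K_2\vee H$ with $v(H)=N$. Since the two poles together with the remainder can realize two disjoint pentagons, the remainder must satisfy $\nu_5(H)=0$; this is the content of the reduction for $t=3$. Each connected component $Q$ of $H$ has a 1-planar $K_2$-join and is pentagon-free.

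\textbf{Step 2: structure of the remainder.} The covering theorem and the zero-packing form of the packing--defect analysis give $\delta(Q)\ge0$ for every such component, with equality only for $Q=B$. This yields $e(H)\le\lfloor 12N/7\rfloor$, matching Corollary~\ref{cor:edge} with $k=0$. The total defect is
\[
\delta(H)=12N-7e(H)\equiv 5N\equiv 5r \pmod 7 .
\]
We then repeat the normalized-resolvent argument from the proof of Theorem~\ref{thm:structure}. The repeated $B$-components cancel, and the leading moment term is governed by $e(H)$, so an extremizer minimizes total defect: $\delta(H)$ is the least nonnegative residue of $5r$ modulo $7$. The next-order moment comparison penalizes splitting defect between two components. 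Hence all nonzero defect sits in a single non-$B$ component $Q$, of bounded order. For $r=0$ there is zero defect and $H=qB$.

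\textbf{Step 3: the exact finite component lemma (main obstacle).} For $1\le r\le6$ we must show that the unique optimal core is $Q_{7+r}$, where $v(Q)\equiv r\pmod 7$ and $v(Q)\le 13$. Replacing $Q$ by $Q$ together with extra copies of $B$, we may assume $v(Q)=7+r\in\{8,\dots,13\}$. We then do three things:
\begin{enumerate}
\item Check that each $Q_{7+r}$ attains the minimal defect $\delta(Q_{7+r})\equiv 5r\pmod 7$, is pentagon-free, and has a 1-planar $K_2$-join.
\item Enumerate, up to isomorphism, all connected pentagon-free graphs of this order with that defect whose $K_2$-join is 1-planar. We prune this using $\Delta$-bounds from 1-planarity of $K_2\vee Q$ and the covering theorem.
\item Compare these candidates through the normalized resolvent. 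Since $n$ is huge, the comparison reduces to a lexicographic order on finitely many walk counts, namely $e(Q)$, then $\sum_v d(v)^2$, then triangles and higher moments.
\end{enumerate}
We expect the difficulty to lie in the completeness of the enumeration and in ruling out ties at the first few moments. The plan is to show that $Q_{7+r}$ uniquely maximizes $\sum d(v)^2$ among the candidates, falling back on the triangle count if that fails.

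A separate check is needed to rule out the alternative $H=qB\cup Q'$ with $v(Q')=r\le6$. Such a small component has larger defect, so the optimal decomposition uses $q-1$ copies of $B$, which is the form stated in the theorem.
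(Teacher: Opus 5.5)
Your outline (two-pole reduction, minimum defect $\equiv5r\pmod 7$ via the resolvent, then a finite moment comparison of one core) matches the paper's. However, Step~3 has a genuine gap: the reduction ``we may assume $v(Q)=7+r$'' is false. A connected component cannot be traded for a smaller one plus copies of $B$. All you know is $v(Q)\equiv r\pmod 7$ and, from Theorem~\ref{thm:defect}, $v(Q)\le 3d+7\le25$ with $d=\delta(Q)$. Minimum-defect competitors really do occur at orders $15,16,18,19,22,25$.

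For example, take $r=1$ ($d=5$) and the block chain $K_4-K_3-\mathsf D-\mathsf D-K_4$ on $15$ vertices. It is a subgraph of $R^{(2)}_{15}$, so its join is 1-planar, and it is pentagon-free with $25$ edges. Hence $(q-2)B\cup$ (this chain) is an admissible competitor of $(q-1)B\cup Q_8$. Candidates of different orders can only be compared through the normalized moments $\Psi_j(Q)=7W_j(Q)-v(Q)W_j(B)$, which are what survive in $\Gamma_H=7f_H-Nf_B$ after the $B$-blocks cancel. Your plan never introduces this normalization and compares raw $\sum_v d(v)^2$ within a fixed order. Here the comparison is not a formality: the chain has $W_2=176$, so $\Psi_2=7\cdot176-15\cdot90=-118=\Psi_2(Q_8)$. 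The tie is broken only at $\Psi_3$, with gap $28$, and $W_3=\one^\top A^3\one=2\sum_{ab\in E}d_ad_b$ counts walks, not triangles. So the step ``$Q_{7+r}$ uniquely maximizes $\sum d(v)^2$'' cannot be established as you state it.

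Two further claims depend on the same missing enumeration.
\begin{enumerate}
\item Your final paragraph dismisses components of order $r\le 6$ as having larger defect. For $r=4$, however, $K_4$ has defect $6$, which is exactly the minimum for that residue, so $qB\cup K_4$ is an admissible minimum-defect competitor. It loses only because $\Psi_2(K_4)=7\cdot36-4\cdot90=-108<-94=\Psi_2(Q_{11})$.
\item In Step~2 you assert without proof that splitting the defect among several non-$B$ components is penalized. In the paper this is the inequality $b_d<\sum_i b_{d_i}$, where $-b_d$ is the largest $\Psi_2$ at defect $d$ over \emph{all} admissible orders. Establishing it therefore needs the full order list $7,\dots,13,15,16,18,19,22,25$ together with $K_4$, not only $7+r$.
\end{enumerate}
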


The argument starts from the two-pole reduction in Proposition~\ref{prop:reduction}, where the additional pole-edge and packing bounds are established. Theorem~\ref{thm:cover} confines each connected remainder with a 1-planar $K_2$-join to one of two path-square envelopes, and Theorem~\ref{thm:defect} turns this restriction into the packing--defect bound used in Corollary~\ref{cor:edge} and the finite-core reduction. The normalized resolvent removes the repeated $B$-components, after which moment comparisons identify the core. The first strict distinction occurs by the fifth walk moment.

Lemma~\ref{lem:finitecomponents} is obtained by exact finite enumeration after the structural reductions reduce the search to finitely many cases. Three exhaustive implementations and an independent cycle enumerator give matching exact certificates. The remaining structural and positive-packing arguments are uniform in the parameters.

\section{Preliminaries and the two-pole reduction}\label{sec:reduction}
Graphs are finite and simple; cycles are not required to be induced. Uniqueness is always understood up to graph isomorphism. Write $v(Q),e(Q)$ for order and size, and $\nf(Q)$ for the maximum number of pairwise vertex-disjoint pentagons. A 1-drawing is a drawing on the sphere with at most one crossing per edge, with no adjacent edges crossing. The poles of $K_2\vee H$ are denoted $z,w$.
Put
\[
 B=K_1\vee 2K_3,\qquad v(B)=7,\quad e(B)=12,
 \qquad \delta(Q)=12v(Q)-7e(Q).
\]
\begin{proposition}[Two-pole reduction]\label{prop:reduction}
Let $t\ge3$, let $n\ge\max\{10^{19},250t\}$, and let $G\in\SPEX(n,tC_5)$. Then
\[
 G=K_2\vee H,\qquad \Delta(H)\le6,\qquad \nu_5(H)\le t-3.
\]
If $\rho=\rho(G)$ and the Perron vector is normalized by $x_z=x_w=1$ at the two vertices of the $K_2$, then
\[
 \rho>\sqrt{2n},\qquad
 \frac2\rho\le x_u\le\frac2{\rho-6}\quad(u\in H).
\]
\end{proposition}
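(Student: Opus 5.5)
The plan follows the standard Tait--Tobin route adapted to the 1-planar setting, as in Zhang--Wang--Wang and Li--Wang--Zhao. First we produce a lower bound on $\rho$ from an explicit $tC_5$-free competitor. The graph $K_2\vee qB\cup\cdots$ (or $K_2\vee P_{n-2}$-type constructions with pentagon-free remainder, which is $3C_5$-free since any pentagon must use a pole) is 1-planar and $tC_5$-free for $t\ge3$. Its spectral radius exceeds $\sqrt{2(n-2)}$ plus a positive linear term, which gives $\rho>\sqrt{2n}$. On the other side, 1-planar graphs have at most $4n-8$ edges, so $\rho\le\sqrt{2e}$ is $O(\sqrt n)$. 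Normalizing the Perron vector with $\max x=1$, we then show that there are two vertices $z,w$ whose entries are close to $1$ and that every other entry is $O(1/\sqrt n)$. For this we use the eigen-equation $\rho^2x_u=\sum_{v\sim u}\sum_{y\sim v}x_y$ together with the edge bound on the bipartite-like structure of 1-planar graphs; in particular $K_{3,k}$-type bounds for 1-planar graphs limit the number of vertices with large common neighbourhoods.

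Next we show $z,w$ are adjacent to every other vertex and to each other. If some $u$ misses $z$, we delete from $u$ the edges going to small-weight vertices and join $u$ to $z$ and $w$ instead. We must check that the resulting graph is still 1-planar, which works by routing $u$ into a face near $z,w$ after deleting $O(1)$ edges, and that it is still $tC_5$-free. A Rayleigh-quotient comparison then gives a strictly larger $\rho$, which is a contradiction. Hence $G=K_2\vee H$. The bound $\Delta(H)\le6$ comes from 1-planarity of $K_2\vee H$: a vertex of $H$ with $7$ neighbours in $H$, together with $z,w$, forces a forbidden substructure. Concretely, $K_2\vee K_{1,7}=K_3\vee\overline{K_7}$ contains $K_{3,7}$, which is not 1-planar (the known threshold is $K_{3,7}$). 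Then the equation $\rho x_u=x_z+x_w+\sum_{v\sim_H u}x_v$, with $x_z=x_w=1$ after rescaling, gives $\rho x_u\ge2$ and $\rho x_u\le2+6\max_H x$. Taking $u$ to be the maximizer yields $x_u\le 2/(\rho-6)$.

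For $\nu_5(H)\le t-3$, we take $t-2$ disjoint pentagons in $H$. Since $n\ge250t$, there are many spare vertices, so three further disjoint pentagons can be formed through the poles: $z$ and $w$ each combine with a path $P_4$ of $H$, and a third pentagon uses a short path plus both poles. This gives $tC_5$, a contradiction. More carefully, the remainder $H$ minus the packing still has $\ge n-5t$ vertices and, by maximality of $\rho$ against the competitor, it has many edges (average degree close to $24/7$). So it contains disjoint $P_4$'s and a $P_3$ avoiding the packed pentagons, yielding $t$ disjoint pentagons in total.

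The main obstacle is the first structural step. The delicate points are establishing exactly two heavy vertices and proving that the edge-switching modification preserves 1-planarity. I would rely on the known argument from Li--Wang--Zhao, which performs this reduction for $C_5$ and $2C_5$, and check that the $tC_5$-freeness is preserved because the switched vertex only gains edges to poles, which all large pentagon packings already use. The lower bound on the number of edges of $H$ needed in the last step also requires care: I would extract it from $\rho>\sqrt{2n}$ via $\rho^2\approx 2N+2\,e(H)\cdot O(1/\sqrt n)$-type expansions.
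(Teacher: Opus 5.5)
Your outline has the same architecture as the paper's. You invoke the Li--Wang--Zhao reduction, use $K_{3,7}$-freeness for $\Delta(H)\le6$, read the Perron bounds off the eigen-equation (that derivation is fine), and derive a contradiction from $t-2$ pentagons in $H$. However, the step $\nu_5(H)\le t-3$ has a genuine gap.

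\emph{The pentagon count.} Vertex-disjoint pentagons can use each pole at most once, so a third pentagon ``through both poles'' is impossible. Only two are needed: $z$ and $w$ each close a $P_4$. So the task is to show that $R=H-V(\text{the } t-2 \text{ pentagons})$ contains $2P_4$.

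\emph{The source of edges.} This is the more serious problem, and your proposed mechanism cannot work. The inequality $\rho>\sqrt{2n}$ says nothing about $e(H)$: already $K_2\vee I_N$ has $\rho^2=\rho+2N>2n$. A $K_2\vee P_{n-2}$ competitor has only $N-1$ remainder edges, which is below the $N+O(t)$ bound that $2P_4$-freeness permits, so it cannot force a contradiction either. What is missing is a direct Rayleigh-quotient comparison with an explicit admissible remainder that has a linear edge surplus. The paper argues contrapositively:
\begin{enumerate}
\item If $R$ is $2P_4$-free, deleting one $P_4$ leaves only star and triangle components, whence $e(H)\le N+25t-30$.
\item The competitor $H_0$, consisting of $\lfloor N/4\rfloor$ copies of $K_4-e$ plus isolated vertices, is pentagon-free, so $K_2\vee H_0$ is $tC_5$-free. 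Its join is 1-planar by an explicit strip-and-gluing drawing.
\item $e(H_0)\ge\frac54N-5\ge\frac98e(H)$ for $n\ge250t$.
\item Evaluate the Rayleigh quotient of $K_2\vee H_0$ at the Perron vector of $G$. The $H_0$-edges carry weight at least $4e(H_0)/\rho^2$ and the $H$-edges at most $4e(H)/(\rho-6)^2$. Since $(\rho/(\rho-6))^2<\frac98$, this gives $\rho(K_2\vee H_0)>\rho(G)$.
\end{enumerate}
If you use $qB$ as the competitor instead, you must actually prove that $K_2\vee qB$ is 1-planar.

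\emph{The pole edge.} Your switching sketch justifies $tC_5$-freeness by saying the switched vertex only gains pole edges ``which all large packings already use.'' That is not an argument, since new pole edges can create new pentagons. If you lean on Li--Wang--Zhao, as the paper does, you should check the hypotheses of their Theorem~1.4 for $F=tC_5$:
\begin{itemize}
\item the minimum-degree condition;
\item $tC_5\subseteq K_2\vee P_{n-2}^{2+}$, via disjoint five-vertex windows;
\item $tC_5\not\subseteq K_2\vee I_{n-2}$;
\item $n\ge2|V(F)|$.
\end{itemize}
Even then, that theorem yields only $N_G(z)\cap N_G(w)=V(G)\setminus\{z,w\}$, not $zw\in E(G)$. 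The conclusion $G=K_2\vee H$ requires this edge, and you give no argument for it. The paper supplies one. If $zw$ is absent, choose $u\in H$, delete its at most six internal edges and the at most two edges crossing $zu$ or $uw$, and draw $zw$ along $zuw$. Any new pentagon $zwabcz$ can be replaced by $zabwcz$ on the same vertices in the original graph, so no new packing arises. Meanwhile the Perron edge-weight gain is at least $1-24/(\rho-6)^2-4/(\rho-6)>0$, contradicting extremality.
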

\begin{proof}
An extremizer is connected: otherwise joining a component of largest spectral radius to another component by a bridge preserves 1-planarity and the cycle-packing constraint and increases the spectral radius. We verify explicitly the hypotheses of \cite[Theorem 1.4]{LWZ} for $F=tC_5$. Here $P_m^{2+}$ denotes the graph obtained from the path $u_1u_2\cdots u_m$ by adding $u_1u_m$ and all edges $u_i u_{i+2}$ for $1\le i\le m-2$, as in~\cite{LWZ}. First, every vertex of $F$ has degree $2$, so the minimum-degree condition holds. Second, since $n-2\ge5t$, choose $t$ disjoint blocks of five consecutive vertices in $P_{n-2}^2$; each block contains a pentagon, so
\[
 tC_5\subseteq P_{n-2}^2\subseteq K_2\vee P_{n-2}^{2+}.
\]
Third, $K_2\vee I_{n-2}$ contains no pentagon, hence $tC_5\not\subseteq K_2\vee I_{n-2}$. Finally, $|V(F)|=5t$ and our assumption $n\ge250t$ implies $n\ge2|V(F)|$. Together with $n\ge10^{19}$, all assumptions of \cite[Theorem 1.4]{LWZ} are therefore satisfied. That theorem yields vertices $z,w$ with
\[
 N_G(z)\cap N_G(w)=V(G)\setminus\{z,w\},\qquad x_z=x_w=1,
\]
together with $\rho>\sqrt{2n}$ and the displayed Perron-coordinate bounds. Theorem~1.4 of~\cite{LWZ} gives these conclusions. The edge $zw$ is established below. Since every 1-planar graph is $K_{3,7}$-free \cite[Lemma 2.4]{LWZ}, the remainder has maximum degree at most six.

We next prove that the pole edge $zw$ is present and that $\nu_5(H)\le t-3$. If $zw$ is absent, choose $u$ in the remainder, delete its at most six internal edges and the at most two edges crossing $zu$ or $uw$, and draw $zw$ along a cleared thin neighbourhood of $zuw$. If a new packing used the pentagon $zwabcz$, replace it on the same five vertices by $zabwcz$ in the original graph. Thus no new forbidden packing occurs. The gain in the Perron edge-weight sum is at least
\[
 1-\frac{24}{(\rho-6)^2}-\frac4{\rho-6}>0,
\]
a contradiction. Hence the poles are adjacent.

Suppose that $H$ contains $t-2$ disjoint pentagons, and remove their $5(t-2)$ vertices. The remaining graph $R$ is $2P_4$-free, since two disjoint $P_4$'s can be completed using different poles. If $R$ contains a $P_4$, deleting its four vertices leaves a $P_4$-free graph, whose connected components are stars or triangles. At most $24$ edges meet the removed vertices. Thus
\[
 e(H)\le N+25t-30,\qquad N=n-2.
\]
On the same vertex set take $H_0$ to be $\lfloor N/4\rfloor$ copies of $K_4-e$ and isolated vertices. Its $K_2$-join is 1-planar with an uncrossed pole edge by the strip and gluing construction of Section~\ref{sec:constructions}. Since $H_0$ is pentagon-free, every pentagon in the join uses a pole, so the join is $tC_5$-free. Put $C=25t-30$. The assumption on $n$ gives $N\ge9C+40$ and
\[
 e(H_0)\ge\tfrac54N-5\ge\tfrac98(N+C)\ge\tfrac98e(H).
\]
Since $\rho>200$, $(\rho/(\rho-6))^2<9/8$. Consequently the difference in Perron edge-weight sums is at least
\[
 \frac{4e(H_0)}{\rho^2}-\frac{4e(H)}{(\rho-6)^2}>0.
\]
Common edges cancel in the actual difference, so the estimate remains valid when the edge sets intersect. This contradiction proves $\nu_5(H)\le t-3$.
\end{proof}

\section{Two-family covering}\label{sec:cover}
On vertices $1,\ldots,m$, let $P_m^2$ contain the pairs at distance one or two. Define
\begin{equation}\label{eq:parents}
 W_m=P_m^2+1m,\qquad
 A_m=P_m^2+14+(m-3)m\qquad(m\ge7).
\end{equation}
The sizes are $2m-2$ and $2m-1$, respectively. We use $A_m$ as an upper envelope; admissibility of the extremal subgraphs is established in Section~\ref{sec:constructions}. Its definition includes both $35$ and $(m-4)(m-2)$.

\begin{theorem}[Two-family covering]\label{thm:cover}
If $Q$ is connected, $v(Q)=m\ge7$, and $K_2\vee Q$ is 1-planar, then, after relabelling, $Q$ is a spanning subgraph of $W_m$ or $A_m$.
\end{theorem}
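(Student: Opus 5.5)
The plan is to fix a 1-drawing $D$ of $G=K_2\vee Q$ with poles $z,w$ and study how the $2m$ spokes $zu,wu$ ($u\in V(Q)$) sit in $D$. The guiding idea is that the spokes from $z$ and $w$ should split the sphere into ``wedges'' arranged cyclically or linearly, and that $Q$ can only use edges between nearby wedges.

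\textbf{Step 1: near-planarize the spokes.} First I would use the local freedom of 1-drawings (rerouting an edge along its crossing partner, swapping the roles of crossing pairs) to normalize $D$. The target is that no spoke crosses another spoke, and the pole edge is either uncrossed or crosses a single edge of $Q$. This ``Step 1'' should be a standard redrawing lemma.

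\textbf{Step 2: cyclic order and the planar skeleton.} After the normalization, the spokes at $z$ give a rotation $u_1,\dots,u_m$. The spokes at $w$ must give the reversed rotation, up to bounded local exceptions caused by crossings with $Q$-edges. Hence the planar skeleton formed by $z$, $w$ and the spoke pairs has $m$ quadrilateral faces $z u_i w u_{i+1}$, with one face containing the edge $zw$.

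\textbf{Step 3: where the $Q$-edges can lie.} An edge $u_iu_j$ of $Q$ must cross the skeleton at most once.
\begin{itemize}
\item If it crosses nothing, it lies in a face, so $|i-j|=1$.
\item If it crosses one spoke $zu_\ell$ or $wu_\ell$, then it joins faces adjacent across $u_\ell$, so $|i-j|\le 2$ cyclically.
\end{itemize}
Because each spoke may be crossed at most once, there is a further counting restriction: each $u_\ell$ can serve as the ``middle'' of at most two distance-two edges, one through $z u_\ell$ and one through $w u_\ell$. Distance-one edges and distance-two edges through different spokes fit simultaneously, which yields the cyclic square $C_m^2$ as the naive envelope.

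\textbf{Step 4: cutting $C_m^2$ down using the face containing $zw$.} The face containing $zw$ (say $zu_mwu_1$) is split by $zw$ and cannot host $u_mu_1$ unless that edge crosses $zw$. Distance-two edges passing through it must cross spokes already constrained by $zw$'s neighbourhood. Two subcases arise:
\begin{itemize}
\item if $u_1u_m$ is present (crossing $zw$), then no distance-two edges wrap around, which gives $W_m=P_m^2+1m$;
\item if $u_1u_m$ is absent, the end structure permits exactly one extra chord at each end, namely $u_1u_4$ and $u_{m-3}u_m$ routed through the available crossing slots near the broken face, which gives $A_m$.
\end{itemize}
Connectivity of $Q$ and $m\ge7$ are needed to exclude degenerate drawings. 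Examples are a spoke order that splits into several runs, which would disconnect $Q$ or merge into the above after relabelling, and small-$m$ wraparounds.

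\textbf{Expected main obstacle.} The hard part is Step 1/2: justifying that an arbitrary 1-drawing can be normalized so that the spokes form a clean skeleton. In particular, one must control crossings between spokes and between $zw$ and spokes, which could scramble the rotation. My first attempt would be a minimal-crossing choice of $D$ combined with the observation that two crossing spokes $zu$, $wv$ can be uncrossed by swapping their routes at the crossing point without increasing crossings elsewhere. Any remaining irregularity would be absorbed as a local exception that still lands inside $W_m$ or $A_m$.
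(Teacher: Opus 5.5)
\textbf{There is a genuine gap, and it is where you expected it.} The normalization in Step~1 is false. As a result, your derivation of the chords $u_1u_4$ and $u_{m-3}u_m$ in Step~4 cannot work.

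\emph{The uncrossing move does not apply.} Suppose $zu$ and $wv$ cross at $x$. Exchanging the half-curves at $x$ gives curves $z\to x\to v$ and $w\to x\to u$. These are new drawings of $zv$ and $wu$, which are already present in $K_2\vee Q$; they are not drawings of $zu$ and $wv$. The swap trick works only for crossing \emph{adjacent} edges, and 1-drawings exclude those anyway.

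\emph{Spoke--spoke crossings are sometimes unavoidable.} Suppose the spokes together with $zw$ formed a plane $K_2\vee I_m$, with faces $zwu_1$, $zu_iwu_{i+1}$ and $zwu_m$.
\begin{enumerate}
\item The vertex $u_1$ lies only on $zwu_1$ and $zu_1wu_2$, while $u_4$ lies only on $zu_3wu_4$ and $zu_4wu_5$. Any curve from $u_1$ to $u_4$ therefore crosses at least two skeleton edges, so the chord $u_1u_4$ cannot be routed ``through crossing slots near the broken face.''
\item The same dual-distance count shows that every $Q$-edge either has strip distance at most two or is $u_1u_m$ crossing $zw$. Hence $Q\subseteq W_m$.
\item For $m\ge6$, $W_m$ is $K_4$-free. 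Indeed $P_m^2$ is $K_4$-free, and $1$ and $m$ have no common neighbour.
\item The $K_4$-capped remainders constructed later in the paper have 1-planar joins; an example is $R^{(1)}_m=P_m^2+14-35$ for $m\ge8$.
\end{enumerate}
So for such $Q$, every 1-drawing of $K_2\vee Q$ contains a crossing $za\times wb$. The extra chords of $A_m$ arise precisely from these crossings. Once Step~1 has removed them, nothing is left in your argument that can produce $A_m$. The ``bounded local exceptions'' of Step~2 are then either vacuous, or they are the whole difficulty.

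\textbf{What is missing is a way to keep spoke--spoke crossings but control them.} A minimal-crossing drawing alone does not obviously give the needed structure. The paper proceeds as follows.
\begin{enumerate}
\item Extend $K_2\vee Q$ to an edge-maximal 1-planar graph $K_2\vee Q'$, which is still a 3-connected join.
\item Take a rich drawing, in which every crossing lies in an empty quadrilateral kite face of the planar skeleton with uncrossed boundary.
\item A crossing $za\times wb$ then has kite $z,w,a,b$ with $zw$ on its boundary. Consequently $zw$ is uncrossed, and there are at most two such crossings, one on each side of $zw$, with disjoint non-pole endpoints.
\item Delete $a$ from each such crossing. The remaining spokes and $zw$ form a plane $K_2\vee I_{m-s}$. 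The merged region $zwb$ is an empty end triangle, so $b=u_1$ is an end of the strip.
\item The deleted vertex $a$ lies in the face $zu_1wu_2$, so its retained neighbours are among $u_1,u_2,u_3$. Two deleted vertices are at dual distance at least three, hence nonadjacent.
\item Reinserting $a$ before $u_1$ turns its edges into $12,13,14$. This is exactly where the chord $14$ of $A_m$ comes from.
\end{enumerate}
The crossing-free case yields $W_m$, exactly as in your Step~3. You need this kite-based normalization, or an equivalent one, to complete the proof.
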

\begin{proof}
Extend $K_2\vee Q$ to an edge-maximal 1-planar graph $G'$ on the same vertices. It remains a join $K_2\vee Q'$, with $Q'$ connected, so $G'$ is 3-connected. Choose a rich drawing, using \cite[Lemma 4.2]{GHHMTZ}. In such a drawing, the four uncrossed boundary edges of every crossing kite belong to the true-planar skeleton; by \cite[Lemma 4.4]{GHHMTZ}, the kite occupies a quadrilateral face of that skeleton.

Call a crossing of the form $za\times wb$ a spoke--spoke crossing. Its boundary kite has cyclic order $z,w,a,b$, so $zw$ is an uncrossed boundary edge of its quadrilateral face. There are at most two such faces, one on either side of $zw$. Distinct such crossings have disjoint pairs of non-pole endpoints: using an endpoint twice would either cross the same spoke twice or cross one of the boundary spokes required to be uncrossed by richness.

First suppose that there is no spoke--spoke crossing. The drawing restricted to $zw$ and all pole spokes is a plane $K_2\vee I_m$. Label its non-pole vertices in strip order $q_1,\ldots,q_m$. Its faces are the two end triangles $zwq_1,zwq_m$ and the quadrilaterals $zq_iwq_{i+1}$ between consecutive vertices. An internal edge can cross this plane base at most once. Unless it crosses $zw$, its endpoints have strip indices differing by at most two. If it crosses $zw$, its endpoints must be $q_1,q_m$. There can be at most one such edge. Hence $Q'\subseteq W_m$.

Now suppose there are $s\in\{1,2\}$ spoke--spoke crossings. From each crossing $za\times wb$ remove $a$, retaining $b$. The removed vertices are distinct and none is a retained endpoint of another such crossing. Since $zw$ belongs to a kite boundary, it is globally uncrossed. The remaining pole spokes and $zw$ therefore form a plane base $\mathcal B\cong K_2\vee I_r$, where $r=m-s\ge5$.

\emph{The empty end face.} Mark the crossing point by $x$. The kite boundary is the uncrossed quadrilateral $zwab$, and its interior contains just the two crossing diagonals. In particular, the regions with boundaries $zwx$ and $zbx$ contain no vertex or edge of the retained base. Deleting $a$ and its incident edges removes the segment $zx$ of $za$ as well as its continuation to $a$. Those two regions merge into a region bounded by $zw$, the entire now-uncrossed arc $wb$, and $bz$; the former crossing point is simply a point of the arc $wb$. This is a facial triangle of $\mathcal B$, empty with respect to its retained vertices and edges. Thus $b$ is an end vertex of the strip order. The two kites, if present, are on opposite sides of $zw$, and yield the two different end triangles.

Label the strip vertices $u_1,\ldots,u_r$, with a retained endpoint at $u_1$ and, when $s=2$, the other at $u_r$. The base faces are
\[
 T_0=zwu_1,\qquad F_i=zu_iwu_{i+1}\ (1\le i<r),\qquad T_r=zwu_r.
\]
After parallel dual edges are identified, the face adjacency graph is the cycle
\[
 T_0,F_1,F_2,\ldots,F_{r-1},T_r,T_0;
\]
the final edge is dual to $zw$.

\emph{The location and neighbours of a removed vertex.} Retain $a$ only as a marked point. At the left end, the segment $zx$ of its former spoke lies in $T_0$, and $za$ crosses $wu_1$ at $x$. The segment $xa$ cannot cross a second base edge. It follows that $a$ lies in $F_1$, the face on the other side of $wu_1$; see Figure~\ref{fig:endface}. An original edge from $a$ to a retained non-pole vertex crosses at most one base edge. The faces reachable from $F_1$ in at most one step are $T_0,F_1,F_2$, whose non-pole boundary vertices are precisely $u_1,u_2,u_3$. Hence these are the only possible retained neighbours. At the right end the analogous set is $\{u_{r-2},u_{r-1},u_r\}$.

\emph{No edge between the two removed vertices.} When $s=2$, the two marked points lie in $F_1$ and $F_{r-1}$. Their distance in the simplified dual is
\[
 \operatorname{dist}(F_1,F_{r-1})=\min\{r-2,3\}\ge3\qquad(r\ge5).
\]
A curve joining them and avoiding base vertices must therefore cross at least three base edges, even if crossing $zw$ were allowed. An edge of the 1-drawing cannot do so. The two removed vertices are consequently nonadjacent.

Finally, an edge between retained vertices cannot cross $zw$ and has strip-index difference at most two. Reinsert each removed vertex at its corresponding end of the linear order. Its possible neighbours are the first three, or the last three, retained vertices. The retained edges and these possible new edges all lie in $P_m^2+14+(m-3)m$. Thus $Q'\subseteq A_m$, and restriction to $Q$ completes the proof.
\end{proof}

\begin{figure}[tb]
\centering
\includegraphics[width=.92\textwidth]{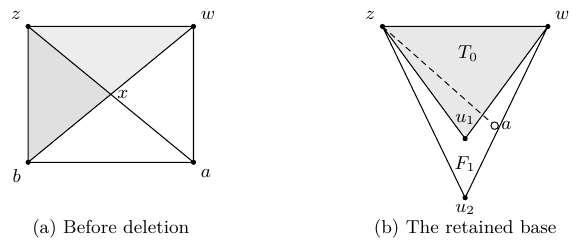}
\caption{The end-face argument. In (a), deleting $a$ merges the shaded regions into the empty triangle $zwb$. Panel (b) shows $T_0=zwu_1$ and $F_1=zu_1wu_2$, with $b=u_1$; other faces are omitted. The dashed former spoke and open point $a$ are deleted data, not part of the retained base.}\label{fig:endface}
\end{figure}

\begin{remark}
The wrapped family is necessary: for example, $P_8^2+18$ is a possible wrapped remainder. Throughout, $W_m$ and $A_m$ serve as upper envelopes; Section~\ref{sec:constructions} establishes admissibility of the extremal constructions.
\end{remark}

\section{Packing and defect}\label{sec:defect}
Czap--Hud\'ak--Madaras \cite[Lemma 3.5]{CHM} prove that a graph with a 1-planar $K_2$-join contains no $K_{3,1,1}$ as a subgraph. Equivalently, no edge has three common neighbours. We call this excluded configuration a three-page book. By \cite[Corollary 3.1]{CHM}, $\Delta(Q)\le6$.

\begin{lemma}[Canonical windows]\label{lem:windows}
For $m\ge5$, the unique pentagon of $P_m^2$ on positions $i,\ldots,i+4$ is
\[
 i\ (i+2)\ (i+4)\ (i+3)\ (i+1)\ i.
\]
There are $m-4$ such windows. A distance-two edge occurs in at most three of them, a distance-one edge in at most two, and either extra end chord of $A_m$ in none. If a surviving family has packing number at most $p$, it contains at most $5p$ windows.
\end{lemma}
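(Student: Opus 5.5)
The proof is a direct inspection of the square of a path. I treat the five assertions in turn: uniqueness of the pentagon in a window, the number of windows, the edge multiplicities, and the packing bound. Throughout, a \emph{window} is a set of five consecutive positions $\{i,\ldots,i+4\}$ with $1\le i\le m-4$. So there are exactly $m-4$ windows.

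\emph{Uniqueness.} Restricted to $\{i,\ldots,i+4\}$, $P_m^2$ is a copy of $P_5^2$. Relabel it $1,\ldots,5$. Its edges are $12,23,34,45,13,24,35$.
\begin{itemize}
\item Vertices $1$ and $5$ have degree two, so a pentagon (a Hamiltonian cycle here) must use $12,13,35,45$.
\item Vertex $3$ is then saturated by $13,35$. The fifth edge must join the two remaining path ends $2$ and $4$, so it is $24$.
\end{itemize}
This gives the cycle $1\,3\,5\,4\,2\,1$, which after relabelling is the stated pentagon $i\,(i+2)\,(i+4)\,(i+3)\,(i+1)\,i$.

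For the packing statement I also check that every pentagon of $P_m^2$ lies in some window. Let $s$ and $s'$ be the least and greatest vertices of a pentagon. The cycle splits into two internally disjoint $s$--$s'$ paths, each with steps of length at most two.
\begin{itemize}
\item Suppose a position $j$ with $s<j<s'$ is not on the cycle. Then each path must jump over $j$, and the only such step is $(j-1)(j+1)$. Both paths would use this same edge, so they would share interior vertices, a contradiction.
\item Hence the vertex set is an interval of five consecutive integers, that is, a window.
\end{itemize}

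\emph{Edge multiplicities.} The pentagon of window $i$ uses all three distance-two pairs inside the window. It uses only two distance-one pairs, $(i,i+1)$ and $(i+3,i+4)$.
\begin{itemize}
\item A distance-two edge $(j,j+2)$ lies in the pentagon of window $i$ iff $i\le j$ and $j+2\le i+4$. This gives $i\in\{j-2,j-1,j\}$, so at most three windows.
\item A distance-one edge $(j,j+1)$ is used iff $i=j$ or $i=j-3$, so at most two windows.
\item The end chords $14$ and $(m-3)m$ of $A_m$ have length three, hence are not edges of $P_m^2$. No window pentagon uses them.
\end{itemize}

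\emph{Packing bound.} Call a window surviving in a subgraph if its whole pentagon survives. Suppose a surviving family contains at least $5p+1$ windows. By pigeonhole on starting positions modulo $5$, at least $p+1$ of them have starts in one residue class. Any two distinct starts in that class differ by at least $5$, so the corresponding windows, and hence their pentagons, are vertex-disjoint. This gives $p+1$ disjoint pentagons, contradicting packing number at most $p$. So there are at most $5p$ surviving windows.

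The only step needing any care is the localisation argument showing that every pentagon of $P_m^2$ lies in a window; the rest is bookkeeping.
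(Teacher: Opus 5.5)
Your proof is correct. Its overall shape matches the paper's: restrict to $P_5^2$, force the cycle from the two degree-two ends, and read off the edge multiplicities from the explicit cycle. Two sub-arguments differ.

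For localization, the paper argues that the shorter of the two arcs between the least and greatest vertices has at most two edges, each of index length at most two, so the span is at most four. You instead show that no intermediate position can be skipped, because both arcs would need the unique jumping edge $(j-1)(j+1)$. The cleanest contradiction there is that the two arcs of a cycle are edge-disjoint; your ``shared interior vertices'' phrasing is slightly loose in the impossible case $j-1=s$, $j+1=s'$.

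For the packing bound, the paper uses the interval-piercing fact: a family of intervals is pierced by as many points as its maximum disjoint subfamily (greedy right endpoints), and each point lies in at most five windows. You use pigeonhole on starting positions modulo $5$, since windows with distinct starts in one residue class are pairwise disjoint. Your route is shorter and fully self-contained. The paper's gives slightly more, namely a transversal of at most $p$ positions meeting every surviving window, though only the count is needed in the defect estimate $3h\ge m-4-5p$.

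Finally, your localization step is not needed for the packing bound as you prove it. Its real role, as in the paper, is to show that the canonical windows are all the pentagons of $P_m^2$.
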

\begin{proof}
The shorter arc between the least and greatest indexed vertices of a pentagon has at most two edges, so its index span is at most four. Five distinct indices force five consecutive positions, and the endpoint degrees force the displayed cycle. Edge incidences follow directly from the formula. A family of intervals is pierced by as many points as its maximum disjoint subfamily: greedily choose the right endpoint of the interval ending first. Each selected point belongs to at most five length-five integer intervals. Apply this to the surviving canonical windows, whose disjoint subfamilies are also pentagon packings in the graph.
\end{proof}

\begin{theorem}[Packing--defect bounds]\label{thm:defect}
Let $Q$ be connected and suppose $K_2\vee Q$ is 1-planar. Put $m=v(Q)$ and $p=\nf(Q)$.
\begin{enumerate}
\item If $p=0$, then $\delta(Q)\ge0$, with equality if and only if $Q\cong B$.
\item If $p\ge1$, then
\[
 \delta(Q)\ge1-10p,
\]
with equality if and only if
\[
 Q\cong R^{(2)}_{5p+10}:=
 A_{5p+10}-35-(5p+6)(5p+8).
\]
\item\label{item:defect-size} If $m\ge7$, then $e(Q)\le\lfloor(5m+5p+1)/3\rfloor$. A non-$B$, pentagon-free component satisfies $m\le10\delta(Q)$. A component with $p\ge1$, on writing $\varepsilon=\delta(Q)-(1-10p)$, satisfies
\[
 m\le5p+10+3\varepsilon.
\]
\end{enumerate}
\end{theorem}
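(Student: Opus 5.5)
Small orders $m\le 6$ are handled separately: a direct check using $\Delta(Q)\le6$ and the excluded three-page book gives $e(Q)\le 2m-3$ or so. For those orders $\delta(Q)=12m-7e(Q)>0$, so they cause no trouble. For $m\ge7$ the plan is to apply Theorem~\ref{thm:cover}, which places $Q$ inside $W_m$ (with $2m-2$ edges) or inside $A_m$ (with $2m-1$ edges). Write $Q=A_m-S$ or $Q=W_m-S$ for a deleted edge set $S$. Then $\delta(Q)=12m-7(2m-1-|S|)=7|S|+7-2m$ in the $A_m$ case, and similarly with $7|S|+14-2m$ in the $W_m$ case. Every bound in the theorem thus becomes a lower bound on $|S|$ in terms of $m$ and $p$.

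\textbf{Two sources of forced deletions.} First, packing: the surviving canonical windows of Lemma~\ref{lem:windows} number at most $5p$. Hence at least $m-4-5p$ windows must be destroyed, and each deleted edge destroys at most three windows. (For $W_m$, the wrap edge $1m$ also creates non-canonical pentagons; these only add constraints, so the count of canonical windows still applies.) This gives $|S|\ge\lceil(m-4-5p)/3\rceil$. Second, the three-page book: in $A_m$ the edges $35$ and $(m-4)(m-2)$ each have three common neighbours. For instance, $35$ has the common neighbours $1$, $4$ and $6$? Checking: $1$ is adjacent to $3$ only via the distance-two edge $13$ and to $5$ only via $15$, which is absent. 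So I must recompute the book edges precisely, and the right notion is forced deletions near the ends where the chords $14$ and $(m-3)m$ create books. I expect that each end of $A_m$ forces at least one deletion that destroys fewer than three windows, and that this is what produces the extra $+1$ in the bound $1-10p$. Combining the two sources gives $e(Q)\le 2m-1-\lceil(m-4-5p)/3\rceil-(\text{end corrections})$. I will check that this equals $\lfloor(5m+5p+1)/3\rfloor$, which is item (iii)'s first claim.

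\textbf{Deriving the defect bounds.} Substituting into $\delta$ gives $\delta\ge 12m-7\lfloor(5m+5p+1)/3\rfloor\ge(m-35p-7)/3$. This alone is not enough: minimizing over $m$ requires that small $m$ also be controlled. For $p\ge1$ the constraint $\nu_5=p$ forces $m\ge 5p$, and the optimization over admissible $m$ should bottom out at $m=5p+10$ with $\delta=1-10p$. The surplus grows by about $1/3$ per extra vertex, which gives $m\le5p+10+3\varepsilon$. For $p=0$ the same computation with no surviving windows gives $\delta\ge(m-7)/3$-type growth, hence $m\le10\delta$ for non-$B$ components. The zero case is then a finite check for $m\le$ small values, isolating $B$. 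Note that $B$ has $7$ vertices but is not a path-square spanning subgraph in the obvious way, so the identification of $B$ is part of that finite check.

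\textbf{Equality cases and the main obstacle.} For the equality cases, tightness requires every deletion to kill exactly three windows, with the windows tiled in disjoint triples, and requires the end corrections to be exactly the deletions of $35$ and $(m-4)(m-2)$. This pins $Q$ down as $R^{(2)}_{5p+10}$. I must also rule out the $W_m$ family by showing it loses one unit relative to $A_m$. The main obstacle is the precise accounting at the two ends: showing that the end chords force defect-costly deletions. This needs the book argument combined with the structure of $W_m$ versus $A_m$. Equally delicate is verifying the non-canonical pentagons through $1m$ in $W_m$ and through the end chords in $A_m$. I would first handle this by an exhaustive local case analysis on the first roughly $8$ vertices, which are finite, independent of $m$.
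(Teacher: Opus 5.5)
Your outline follows the paper's route: the covering theorem, deletion sets from $A_m$ or $W_m$, canonical-window counting, three-page books, and equality by tight window destruction. The window-only parts are right. The quantity $2m-1-\lceil(m-4-5p)/3\rceil$ already equals $\lfloor(5m+5p+1)/3\rfloor$, and $\delta\ge(m-35p-7)/3$ gives both size bounds in (iii).

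The gap is the step that gives $\delta\ge1-10p$. You have not located the books, and the mechanism you expect is false. In $R^{(2)}_{5p+10}$, the equality graph you yourself name, the two book-breaking deletions $35$ and $(m-4)(m-2)$ each destroy three windows. So no lemma saying ``each end forces a deletion destroying fewer than three windows'' can hold; it would give $\delta\ge5-10p$. An additive end correction would also contradict your own claim that the edge bound equals $\lfloor(5m+5p+1)/3\rfloor$. Meanwhile $(m-35p-7)/3$ at $m=5p$ is only $-10p-7/3$, and nothing in the proposal supplies the bound that controls small $m$.

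The missing accounting is as follows. With $h=|S|$ one has $\delta=7-2m+7h$, and $3h\ge m-4-5p$ gives
\[
 \delta\ge-1-10p+h.
\]
So every deletion has net defect cost at least one, even when it destroys three windows, and it suffices to show $h\ge2$. For $m\ge10$ this comes from two edge-disjoint books in $A_m$. The left book has spine $34$ and pages $1,2,5$; the page $1$ exists because of the chord $14$. The right book is its reflection.

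In your $m$-language, $h\ge2$ is the decreasing bound $\delta\ge21-2m$. Its crossing with $(m-35p-7)/3$ is exactly at $m=5p+10$, $\delta=1-10p$, which is what your ``optimization bottoms out'' needs. For $7\le m\le9$, a single forced deletion gives $\delta\ge14-2m\ge-4>1-10p$.

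Equality then forces $h=2$, $m=5p+10$, and one deletion in each book, each of capacity three. In the left book the edges $13,14,23,24,34,45$ have capacities $1,0,1,2,1,2$, so that deletion must be $35$. The wrapped family needs no book argument, since the same count gives $\delta\ge6-10p+h$.

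Your small-order check is also too weak as stated. The conditions $\Delta(Q)\le6$ and book-freeness allow the octahedron $K_{2,2,2}$ at $m=6$, with $12$ edges, $p=1$ and $\delta=-12<-9$. It is excluded only by the 1-planar edge bound $e(K_2\vee Q)\le4(m+2)-8$, i.e.\ $e(Q)\le2m-1$, which gives $\delta\ge7-2m\ge-5$ when $p=1$. For $p=0$ you need the pentagon-free maxima $0,1,3,6,7,9$, all below $12m/7$.
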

\begin{proof}
For $m\ge7$, use Theorem~\ref{thm:cover}. If $Q\subseteq A_m$ and $h=e(A_m)-e(Q)$, Lemma~\ref{lem:windows} gives
\begin{equation}\label{eq:basic}
 3h\ge m-4-5p,\qquad
 \delta(Q)=7-2m+7h\ge-1-10p+h.
\end{equation}
If $Q\subseteq W_m$, the analogous estimate is
\begin{equation}\label{eq:wrapped}
 \delta(Q)\ge6-10p+h.
\end{equation}
Both imply the edge bound in part~\textup{(\ref{item:defect-size})}.

For $p=0$, the $A_m$ estimate is
\[
 e(Q)\le2m-1-\left\lceil\frac{m-4}{3}\right\rceil
 =\left\lfloor\frac{5m+1}{3}\right\rfloor\le\frac{12m}{7}.
\]
The last inequality is strict for $m>7$. At $m=7$, equality requires deletion of exactly one edge from $A_7$. The three canonical windows have just the edge $35$ in common. Deleting it gives $B$. The wrapped estimate is strictly smaller. For $m\le6$ the ordinary pentagon-free edge maxima are at most $0,1,3,6,7,9$, respectively, all strictly below $12m/7$.

Here is a short justification of the only small bounds needed. On five vertices, any graph with at least eight edges is $K_5$ minus at most two edges and contains a Hamilton cycle. On six vertices, if a universal vertex exists its deletion is $P_4$-free, with at most four edges. If maximum degree is at most three, there are at most nine edges. Otherwise take a degree-four vertex $v$, neighbours $A$ and non-neighbour $x$. If there were at least ten edges, $e(A)+d_A(x)\ge6$. When $d_A(x)\le2$, $A$ has a $P_4$. When $d_A(x)=3$, a $P_4$-free $A$ with three edges must be a star or a triangle plus an isolated vertex, and adjoining $x$ produces a four-vertex path with both ends in $A$. When $d_A(x)=4$, any edge in $A$ produces such a path. Each case gives a pentagon through $v$.

For $p\ge1$ and $m\ge10$, $A_m$ contains two edge-disjoint three-page books. The left book has central edge $34$, with pages $1,2,5$; the right book is its reflection. Each must lose an edge, so $h\ge2$. Equation~\eqref{eq:basic} proves $\delta(Q)\ge1-10p$. For $7\le m\le9$, at least one edge must be deleted to avoid the left book, giving $\delta(Q)\ge14-2m\ge-4> -9$. For $m\le6$, necessarily $p=1$, and the elementary join edge bound $e(Q)\le2m-1$ gives $\delta(Q)\ge7-2m>-9$. The wrapped family cannot give equality, by \eqref{eq:wrapped}.

Equality in the $A_m$ case forces $h=2$ and $m=5p+10\ge15$. Exactly six canonical windows must be destroyed. Thus each deleted edge destroys three, with no overlap, and one deleted edge must belong to each end book. The only edge of the left book belonging to three canonical windows is $35$; the right one is its reflection. These deletions give $R^{(2)}_{5p+10}$. This graph consists of two $K_4$ caps and a central path square on $5p+4$ vertices. Its packing number is $p$ and its join is 1-planar, as constructed below.

Finally, for $p=0$ and $m\ge7$ the edge bound implies $\delta(Q)\ge(m-7)/3$. If $Q\not\cong B$, then $\delta(Q)\ge1$, so $m\le3\delta(Q)+7\le10\delta(Q)$. The small cases also satisfy this inequality. Substituting $\delta(Q)=1-10p+\varepsilon$ in the edge bound of part~\textup{(\ref{item:defect-size})} yields $m\le5p+10+3\varepsilon$; the cases $m\le6$ satisfy it as well.
\end{proof}

\section{Extremal constructions and edge maxima}\label{sec:constructions}
Define, for $m\ge8$,
\[
 R_m^{(1)}=P_m^2+14-35,\qquad
 R_m^{(2)}=A_m-35-(m-4)(m-2).
\]
A central path-square strip has a 1-drawing of its $K_2$-join with $zw$ uncrossed and facial triangles at both ends. To see this, draw the planar double fan on $z,w,v_1,\ldots,v_s$. Draw $v_iv_{i+2}$ across the spoke $zv_{i+1}$ for odd $i$, and across $wv_{i+1}$ for even $i$. Its two segments lie in the adjacent triangular faces at that spoke. Consecutive added edges use opposite sides of the path, while edges on the same side have disjoint face interiors. Thus no added edges cross one another, each crossed spoke is used once, and $zwv_1,zwv_s$ remain facial triangles. Each end $K_4$ cap is attached using a $K_6$ drawing with one prescribed facial triangle. The clique gluing lemma \cite[Lemma 3.1]{GHHMTZ} applies. A single $K_6$ is never required to supply two facial triangles sharing $zw$. Separate remainder components can be glued along their uncrossed copies of $zw$.

For $p\ge1$, put $M=5p+10$ and define the following admissible cores:
\begin{center}\begin{tabular}{clcc}\toprule
$E$&$C_{p,E}$&order&defect\\
\midrule
0&$R_M^{(2)}$&$M$&$1-10p$\\
1&$R_{M+3}^{(2)}-68$&$M+3$&$2-10p$\\
2&$R_{M-1}^{(2)}$&$M-1$&$3-10p$\\
3&$R_{M+2}^{(2)}-57$&$M+2$&$4-10p$\\
4&$R_{M-2}^{(2)}$&$M-2$&$5-10p$\\
5&$R_{M+1}^{(2)}-57$&$M+1$&$6-10p$\\
6&$R_{M-3}^{(1)}$&$M-3$&$7-10p$\\
\bottomrule
\end{tabular}\end{center}
Each has packing number $p$. For the three edge-deleted cases, the deleted distance-two edge makes its intermediate vertex a cut vertex; the resulting small block has order at most four, and the other central path-square block has order $5p+3$ or $5p+4$. All cycles lie within blocks. The other cases are immediate from the cap decomposition. Consequently, for every integer $q\ge0$, the graph $K_2\vee(qB\cup C_{p,E})$ is 1-planar and has at most $p+2$ disjoint pentagons.

For $p=0$, set $\mathsf D=K_4-e$ and use the block chains
\begin{align*}
 Q_8&=K_4-K_2-K_4,& Q_9&=K_4-K_3-K_4,\\
 Q_{10}&=K_4-\mathsf D-K_4,&Q_{11}&=K_4-K_3-K_3-K_4,\\
 Q_{12}&=K_4-K_3-\mathsf D-K_4,&Q_{13}&=K_4-\mathsf D-\mathsf D-K_4.
\end{align*}
Adjacent blocks share just one cut vertex; internal diamonds use their two nonadjacent degree-two vertices as ports. These are pentagon-free and have 1-planar joins with uncrossed $zw$. Their defects, in order, are $5,3,1,6,4,2$.

\begin{proof}[Proof of Corollary~\ref{cor:edge}]
If $H$ has positive-packing components with packing numbers $p_1,\ldots,p_c$, then Theorem~\ref{thm:defect} gives
\[
 D(H):=12N-7e(H)\ge c-10\sum_i p_i\ge1-10k.
\]
If there are none, $D(H)\ge0$. Also $D(H)\equiv5N\pmod7$. Choose the core $C_{k,E}$ whose defect is the least integer at least $1-10k$ in this congruence class, and pad it with copies of $B$. For $k=0$ use $B,Q_8,\ldots,Q_{13}$. These constructions attain the bounds.
\end{proof}

\section{Normalized resolvents and finite cores}\label{sec:resolvent}
For $x>6$ define
\[
 f_Q(x)=\one^\top(xI-A_Q)^{-1}\one,
 \quad W_j(Q)=\one^\top A_Q^j\one,
 \quad\Psi_j(Q)=7W_j(Q)-v(Q)W_j(B).
\]
Here $W_0(B),W_1(B),W_2(B),W_3(B)$ are $7,24,90,324$ and
\[
 f_B(x)=\frac{7x+10}{x^2-2x-6}.
\]
For $H$ of order $N$ put $\Gamma_H=7f_H-Nf_B$. The exact identity
\begin{equation}\label{eq:resolvent}
 \rho(K_2\vee H)-1=2f_H(\rho(K_2\vee H))
\end{equation}
is \cite[Lemma 2.5]{LWZ}; it also follows directly from the eigenvector equation at the two poles. Increasing $f_H$ at the old spectral root increases that root.

Let $Q_1,\ldots,Q_c$ be the components of $H$ with positive packing number. Put $p_i=\nf(Q_i)$, $P=\sum_{i=1}^c p_i=\nf(H)$, and $D=D(H)=12N-7e(H)$. Let $d_0$ be the sum of the defects of the pentagon-free components, and put $\varepsilon_i=\delta(Q_i)-(1-10p_i)$ for $1\le i\le c$. Theorem~\ref{thm:defect} implies
\[
 D=c-10P+\sum_i\varepsilon_i+d_0,
 \qquad \eta:=D+11P=c+P+\sum_i\varepsilon_i+d_0\ge0.
\]
The total number $S$ of vertices in non-$B$ components satisfies
\begin{equation}\label{eq:mass}
 S\le5P+10c+3\sum_i\varepsilon_i+10d_0
 \le10D+105P\le10\eta.
\end{equation}
For $P=0$ the same assertion is $S\le10D$.
Since $\Delta(H)\le6$, $W_j(Q)\le v(Q)6^j$. All $B$ components cancel identically in $\Gamma_H$. Consequently
\begin{equation}\label{eq:tail}
 \left|\Gamma_H(x)+\frac{2D}{x^2}\right|
 \le\frac{5040\eta}{x^3(1-6/x)}.
\end{equation}
Thus the bound depends only on the defect and the packing number.

\begin{theorem}[Minimum defect and finite core]\label{thm:kernel}
Let $t\ge4$, $k=t-3$, and
\[
 n\ge N_{\mathrm{red}}(t):=\max\{10^{19},250t,(100000+3000k)^2\}.
\]
Write $N=n-2$ and let $E\in\{0,\ldots,6\}$ be determined by
\[
 D_*:=1-10k+E\equiv5N\pmod7.
\]
Every graph in $\SPEX(n,tC_5)$ has the form $K_2\vee H$, where $H=qB\cup F$ for some integer $q\ge0$, $F$ has no component isomorphic to $B$, and
\begin{gather*}
 D(H)=D_*,\qquad \nf(F)=k,\\
 v(F)\le5k+10(E+1)\le5k+70=5t+55.
\end{gather*}
If $c$ is the number of positive-packing components of $F$, their excesses and the pentagon-free defect satisfy
\begin{equation}\label{eq:budget}
 E=(c-1)+\sum_i\varepsilon_i+d_0.
\end{equation}
\end{theorem}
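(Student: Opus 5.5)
The plan is to compare $G=K_2\vee H$ with the competitor $G^*=K_2\vee(q^*B\cup C_{k,E})$ through the resolvent identity \eqref{eq:resolvent}, using the tail estimate \eqref{eq:tail} at $x=\rho$.

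By Proposition~\ref{prop:reduction}, $G=K_2\vee H$ with $\Delta(H)\le6$ and $\nf(H)\le k$. Write $H=qB\cup F$, where $F$ collects the non-$B$ components. Then $D(H)\equiv5N\pmod7$ automatically, and Theorem~\ref{thm:defect} gives $D(H)\ge c-10P\ge1-10P\ge1-10k$ whenever $P\ge1$. The competitor $G^*$ is admissible by Section~\ref{sec:constructions} and has $D(H^*)=D_*$. Since both remainders have order $N$, comparing $f_H$ with $f_{H^*}$ at a common point $x$ is the same as comparing $\Gamma_H$ with $\Gamma_{H^*}$. The resolvent equation is monotone, so $\rho(G)\ge\rho(G^*)$ forces $\Gamma_H(x)\ge\Gamma_{H^*}(x)$ at $x=\rho(G)$, or at least at one of the two roots.

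Now apply \eqref{eq:tail}. We have $\Gamma_H(x)=-2D/x^2+O(5040\eta/x^3)$, and $x>\sqrt{2n}\ge\sqrt2(100000+3000k)$. The main terms differ by $2(D-D_*)/x^2\ge 2\cdot7/x^2$ when $D>D_*$, since the defects are congruent mod $7$. The error terms are controlled by $\eta$ and $\eta^*$, which are $O(1+k)$ for the competitor. The difficulty is that $\eta$ for $H$ is not a priori bounded.

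To handle this, I would first bound $D$ crudely. Use the inequality at order $x^{-2}$ with the error $5040\eta/x^3$. Since $\eta=D+11P\le D+11k$, this yields
\[
2(D-D_*)/x^2\le 5040\cdot\frac{(D+11k)+(D_*+11k+c_0)}{x^3(1-6/x)}.
\]
For $x\ge10^5+3000k$ the right side is at most a small fraction of $(D+22k+C)/x^2$, which bounds $D$ linearly in $k$. Feeding this back, the error becomes $O((1+k)/x^3)$, which is below $14/x^2$ given the choice $(100000+3000k)^2$. That forces $D=D_*$, with strict inequality excluding $D>D_*$, so $\rho(G)<\rho(G^*)$ in that case. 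This bootstrapping of the constant is the step I expect to be most delicate. The constants in $N_{\mathrm{red}}$ appear tuned to exactly this.

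With $D=D_*$, the decomposition $D=c-10P+\sum\varepsilon_i+d_0$ gives
\[
E=(c-1)+\sum\varepsilon_i+d_0+10(k-P).
\]
Since every term is nonnegative and $E\le6$, this forces $P=k$ once $P\ge1$, which proves \eqref{eq:budget}. The case $P=0$ (needing $D\ge0>D_*$ for $k\ge1$) is excluded because $D_*=1-10k+E<0$. For the order bound, substitute into \eqref{eq:mass}: $v(F)=S\le5P+10c+3\sum\varepsilon_i+10d_0$. Using $c-1+\sum\varepsilon_i+d_0=E$ and $k=P$, this is at most $5k+10+10E=5k+10(E+1)\le5k+70$.
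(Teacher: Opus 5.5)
Your proposal is correct and is essentially the paper's proof. You compare with $K_2\vee(q_*B\cup C_{k,E})$ through \eqref{eq:resolvent} and \eqref{eq:tail} at $x=\rho(G)$, rule out $D>D_*$ by the mod-$7$ gap $D-D_*\ge7$, and then read off $P=k$, \eqref{eq:budget} and the order bound from the defect identity and \eqref{eq:mass}. The lower bound $D\ge D_*$, which you use implicitly, follows from the congruence together with Theorem~\ref{thm:defect}, as in Corollary~\ref{cor:edge}.

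The only difference is cosmetic: no bootstrap is needed. Since $\eta\le D+11k=\Delta_D+(D_*+11k)\le\Delta_D+k+7$ with $\Delta_D=D-D_*$, the error term is linear in $\Delta_D$ with coefficient $5040/(x-6)<2$. A single inequality, positive once $x>7566+720k$, therefore already gives the contradiction.
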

\begin{proof}
Use Proposition~\ref{prop:reduction}. Corollary~\ref{cor:edge} gives $D\ge D_*$. Put $q_*=(N-v(C_{k,E}))/7$. Since $\delta(C_{k,E})=D_*\equiv5N\pmod7$ and $\delta(C_{k,E})\equiv5v(C_{k,E})\pmod7$, $q_*$ is an integer. The bound on $n$ gives $N\ge5k+13\ge v(C_{k,E})$, so $q_*\ge0$. The admissible comparison graph $H_*=q_*B\cup C_{k,E}$ has defect $D_*$. If $D>D_*$, put $\Delta_D:=D-D_*\ge7$. Since $P\le k$, \eqref{eq:tail} gives
\[
 \Gamma_{H_*}(x)-\Gamma_H(x)
 \ge\frac{2\Delta_D}{x^2}
 -\frac{5040[\Delta_D+2(D_*+11k)]}{x^3(1-6/x)}.
\]
As $D_*+11k\le k+7$, this is positive when $x>7566+720k$, in particular at $x=\rho(G)$ under the stated bound. Equation~\eqref{eq:resolvent} contradicts extremality. Hence $D=D_*$.

If $P=0$, then $D\ge0$, whereas $D_*\le7-10k<0$, a contradiction. Thus $c\ge1$. Rearranging the defect identity gives
\[
 E=(c-1)+10(k-P)+\sum_i\varepsilon_i+d_0.
\]
Every term is nonnegative, while $E\le6$, so $P=k$. This proves \eqref{eq:budget}. Now
\[
 v(F)\le5k+10c+3\sum_i\varepsilon_i+10d_0
 \le5k+10(E+1).
\]
\end{proof}

\begin{corollary}[The zero-excess residue]\label{cor:zero}
Under the hypotheses of Theorem~\ref{thm:kernel}, if $N\equiv5k+10\pmod7$, then the unique extremal graph is
\[
 K_2\vee\left(\frac{N-5k-10}{7}B\ \cup R_{5k+10}^{(2)}\right).
\]
\end{corollary}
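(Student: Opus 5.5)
Theorem~\ref{thm:kernel} applies directly. The congruence $N\equiv5k+10\pmod7$ forces $E=0$: we have $5N\equiv25k+50\equiv4k+1\pmod 7$ and $1-10k\equiv1+4k\pmod7$, so $D_*=1-10k$ already lies in the right residue class.

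Every $G\in\SPEX(n,tC_5)$ therefore has the form $K_2\vee(qB\cup F)$, where $F$ has no $B$-component, $D(H)=1-10k$, and $\nf(F)=k$. The budget identity \eqref{eq:budget} with $E=0$ has only nonnegative terms, so $c=1$, every $\varepsilon_i=0$, and $d_0=0$.

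From $d_0=0$ together with part~(1) of Theorem~\ref{thm:defect}, every pentagon-free component of $F$ has zero defect and is therefore isomorphic to $B$. Since $F$ contains no copy of $B$, it has no pentagon-free components at all. Hence $F$ is a single connected graph $Q$ with $\nf(Q)=k$ and $\delta(Q)=1-10k$ (excess $\varepsilon_1=0$).

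The equality case of part~(2) of Theorem~\ref{thm:defect} then gives $Q\cong R^{(2)}_{5k+10}$. Counting vertices, $q=(N-5k-10)/7$, so every extremizer is isomorphic to the displayed graph.

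To finish, we need $\SPEX(n,tC_5)$ to be nonempty and to check that the displayed graph is admissible. Nonemptiness holds because the class of $n$-vertex graphs is finite. Admissibility comes from Section~\ref{sec:constructions}: the graph $K_2\vee(qB\cup C_{k,0})$ is 1-planar with $\nf\le k+2=t-1$, hence $tC_5$-free. Since every extremizer is isomorphic to this graph, uniqueness follows.

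The only step with any content is the uniqueness of the equality case in Theorem~\ref{thm:defect}(2). That result is already stated, so the argument here reduces to reading off the budget identity and checking the residue calculation.
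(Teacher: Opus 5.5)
Your proposal is correct and follows the paper's own argument. With $E=0$, the budget identity \eqref{eq:budget} forces $c=1$, $\varepsilon_1=0$ and $d_0=0$, and the equality cases of Theorem~\ref{thm:defect} then identify the pentagon-free part as copies of $B$ and the core as $R^{(2)}_{5k+10}$. You also supply details the paper leaves implicit: the residue computation giving $E=0$, why no pentagon-free components survive in $F$, and the nonemptiness and admissibility check.
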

\begin{proof}
Here $E=0$. Equation~\eqref{eq:budget} forces one positive-packing component, zero excess, and no positive defect in the pentagon-free part. The equality cases of Theorem~\ref{thm:defect} determine both parts uniquely.
\end{proof}

\section{Pentagon-free components and the boundary case}\label{sec:t3}
For $t=3$, $H$ is pentagon-free. Let $r\in\{0,\ldots,6\}$ satisfy $N\equiv r\pmod7$. The same resolvent comparison, now with $\eta=D$, gives
\[
 D(H)=\tau_r,\qquad (\tau_0,\tau_1,\ldots,\tau_6)=(0,5,3,1,6,4,2).
\]
Indeed a larger defect differs by at least seven, and \eqref{eq:tail} makes the admissible minimum-defect competitor strictly better for $\rho>6846$. Thus all non-$B$ components together have at most 60 vertices.

\begin{lemma}[Finite component bound]\label{lem:finitecomponents}
Let $Q$ be a connected pentagon-free graph, not isomorphic to $B$, for which $K_2\vee Q$ is 1-planar, and suppose $d=\delta(Q)\in\{1,\ldots,6\}$. Then
\[
 \Psi_2(Q)\le-b_d,\qquad
 (b_1,\ldots,b_6)=(46,78,54,86,118,94).
\]
The pair $(\Psi_2(Q),\Psi_3(Q))$ is lexicographically maximized, uniquely up to isomorphism, by the respective graphs
\[
 Q_{10},\quad Q_{13},\quad Q_9,\quad Q_{12},\quad Q_8,\quad Q_{11}.
\] Moreover, if $d=d_1+\cdots+d_s\le6$, with $s\ge2$ and all $d_i\ge1$, then $b_d<\sum_i b_{d_i}$.
\end{lemma}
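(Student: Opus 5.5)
The proof is a reduction to a finite search followed by an exact enumeration, as the introduction announces.

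\emph{Finiteness.} By Theorem~\ref{thm:defect}(\ref{item:defect-size}), a connected pentagon-free non-$B$ component with defect $d\le6$ has $m\le10d\le60$. For $m\ge7$ the sharper bound $m\le3d+7\le25$ holds, since the edge bound gives $\delta(Q)\ge(m-7)/3$.

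\emph{Enumeration.} By Theorem~\ref{thm:cover}, $Q$ is a connected spanning subgraph of $W_m$ or $A_m$. The condition $\delta(Q)=d$ fixes the number of edges, $e(Q)=(12m-d)/7$, so $m\equiv 3d\pmod 7$ and only one or two orders survive for each $d$. Pentagon-freeness forces at least $\lceil(m-4)/3\rceil$ deletions from the canonical windows by Lemma~\ref{lem:windows}, and $e(Q)$ fixes the number of deleted envelope edges exactly. For example, $d=1$ gives $m\in\{10,17,24\}$ with $h=e(A_m)-e(Q)$ equal to $2,5,6$. The search therefore ranges over subsets of size $h\le7$ of the envelope edges. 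Each candidate is filtered by connectivity, pentagon-freeness, absence of three-page books, and being a pentagon-free subgraph of the envelope. Candidates with $m\le6$ are handled by the small edge maxima listed in the proof of Theorem~\ref{thm:defect}.

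\emph{Why the filter loses nothing.} Admissibility of the join need not be tested for the upper bound. Every actual $Q$ passes the filter, so maximizing $\Psi_2$ over the larger filtered set still yields an upper bound. The claimed maximizers $Q_8,\dots,Q_{13}$ are admissible by Section~\ref{sec:constructions}, so the bound is attained.

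\emph{Computing the invariants.} For each surviving candidate I would compute $W_2$ and $W_3$ from the degree sequence and from $\one^\top A^3\one=\sum_{uv\in E}2d(u)d(v)$. From these I get $\Psi_j=7W_j-mW_j(B)$, using $W_2(B)=90$ and $W_3(B)=324$. For $\Psi_2$ there is a useful identity: $W_2=\sum_u d(u)^2$ with $\sum_u d(u)=2e$ fixed. Hence $\Psi_2$ is maximized by degree sequences that are as unequal as $\Delta\le6$ and the book constraints permit. This explains why the maximizers are $K_4$-capped block chains. As a check, $Q_{10}$ should give $\Psi_2=-46$. The certificate is that the maximal value equals $-b_d$ and is attained only by the listed graph. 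Any ties in $\Psi_2$ are broken by comparing $\Psi_3$. Uniqueness is up to isomorphism, and I would confirm it with a canonical-form comparison.

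\emph{Subadditivity.} The final claim is a direct check over the finitely many partitions of $d\le6$ into at least two parts of size at least $1$. For example, $b_2=78<92=2b_1$ and $b_6=94<b_1+b_5=164$. All remaining partitions follow by the same arithmetic, and I would simply tabulate them.

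The main obstacle is the enumeration itself. It must be exhaustive and trustworthy, which requires making sure that the envelope reduction misses no labelling, including both families and both orientations. It also requires the candidate count to stay manageable. The count is at most $\binom{2m}{h}$ with $m\le25$ and $h\le7$, which is large but feasible. It shrinks substantially if deletions are first restricted to window-hitting edges that are forced by the disjoint-window structure.
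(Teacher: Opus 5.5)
Your proposal is correct and is essentially the paper's proof: the covering and defect theorems reduce everything to fixed-size deletion sets in the envelopes, which you filter by pentagon-freeness and three-page-book exclusion, rank by $(\Psi_2,\Psi_3)$ with isomorphism checks, match against the admissible constructions of Section~\ref{sec:constructions}, and follow with the finite partition arithmetic for subadditivity. Some bookkeeping slips need fixing, though none affects the argument: your own bound $m\le3d+7$ leaves only $m=10$ for $d=1$ (and at $m=17$ one has $h=4$, not $5$, below the window bound); defects $5$ and $6$ each admit three orders ($8,15,22$ and $11,18,25$), not ``one or two''; and the paper disposes of $W_m$ at once via \eqref{eq:wrapped}, which forces $h=0$ when $p=0$ and $\delta\le6$, whereas $W_m$ itself contains a pentagon.
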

\begin{proof}[Proof by exact finite enumeration]
For $m\ge7$, Theorem~\ref{thm:defect} gives $m\le3d+7\le25$. The zero-defect graph $B$ is included in the table below as a check on the boundary of the enumeration. A connected non-$B$ component under consideration has defect at most six. If its order is at most six, it must be $K_4$, of defect six, by the small bounds above. For order at least seven the wrapped family cannot occur: \eqref{eq:wrapped} would force $h=0$, but $W_m$ has a pentagon. In $A_m$ one has
\[
 \delta=7-2m+7h,\quad
 h\ge\lceil(m-4)/3\rceil,\quad 0\le\delta\le6.
\]
The only possible orders are
\[
 7,8,9,10,11,12,13,15,16,18,19,22,25.
\]
The following table records all fixed-budget pentagon-hitting deletion sets in these upper envelopes; the entries count labelled deletion sets. Four contain a forbidden three-page book and are excluded by \cite[Lemma 3.5]{CHM}.

\begin{center}\small\begin{tabular}{rrrrrr}\toprule
$m$&$\delta$&all sets&book-free sets&best $\Psi_2$&best $\Psi_3$ at $\Psi_2$ maximum\\
\midrule
7&0&1&1&0&0\\
8&5&5&3&$-118$&$-604$\\
9&3&1&1&$-54$&$-270$\\
10&1&1&1&$-46$&$-258$\\
11&6&9&7&$-94$&$-484$\\
12&4&2&2&$-86$&$-458$\\
13&2&1&1&$-78$&$-432$\\
15&5&3&3&$-118$&$-632$\\
16&3&1&1&$-110$&$-606$\\
18&6&4&4&$-150$&$-806$\\
19&4&1&1&$-142$&$-780$\\
22&5&1&1&$-174$&$-954$\\
25&6&1&1&$-206$&$-1128$\\
\bottomrule
\end{tabular}\end{center}

\smallskip
\noindent\emph{Verification of the finite assertion.}
List the edges of $A_m$ and encode each pentagon by the bit mask of its five edges. A deletion set is valid exactly when its mask intersects every cycle mask. Its size $h$ is fixed by the defect.

The first exhaustive algorithm selects an unhit cycle and branches on each of its five edges. Every hitting set extending the current choice must contain at least one of these edges. Branches with too many edges are discarded; a cardinality bound using the maximum number of newly hit cycles prunes only impossible branches. The second algorithm processes cycle masks in reverse order, adjoining an edge whenever the current set misses the next cycle, and retains all masks of size at most $h$. In either algorithm, a completed hitting set of size less than $h$ is extended by every choice of unused edges needed to reach size $h$. Both algorithms return the same 31 sets. A third implementation branches directly on keeping or deleting each edge and returns the same family. The accompanying JSON certificate records every set and its exact integer moments, and explicit vertex bijections resolve all ties at the optimal moment pair.

The two order-eight deletion sets $\{12,46\}$ and $\{35,78\}$ have $\Psi_2=-76$, larger than $Q_8$'s $-118$, but both contain a three-page book and are therefore inadmissible.

For each positive defect $d\le6$, the unique best component up to isomorphism is
\begin{center}\begin{tabular}{crrr}\toprule
$d$&component&$\Psi_2$&$\Psi_3$\\
\midrule
1&$Q_{10}$&$-46$&$-258$\\
2&$Q_{13}$&$-78$&$-432$\\
3&$Q_9$&$-54$&$-270$\\
4&$Q_{12}$&$-86$&$-458$\\
5&$Q_8$&$-118$&$-604$\\
6&$Q_{11}$&$-94$&$-484$\\
\bottomrule
\end{tabular}\end{center}
For every partition of $d\le6$ into two or more positive parts, the sum of the corresponding best $\Psi_2$ values is strictly less than the single-core value. This is a finite six-budget check, included in the exact verifier. The only remaining same-$\Psi_2$ competition between single components is defect five at orders eight and fifteen; the $\Psi_3$ gap is 28 in favour of $Q_8$.
Explicit graph-isomorphism checks identify every candidate tying the winning pair $(\Psi_2,\Psi_3)$ with the stated core. Together with the constructions in Section~\ref{sec:constructions}, this proves the finite assertions.
\end{proof}

\begin{proof}[Proof of Theorem~\ref{thm:t3main}]
The covering theorem, defect theorem, and minimum-defect argument reduce every extremizer to the finite situation just classified. By the last assertion of Lemma~\ref{lem:finitecomponents}, distributing a positive defect among two or more non-$B$ components is strictly inferior already at $\Psi_2$. Hence the exceptional part is a single component of defect $\tau_r$ (unless $\tau_r=0$, when it is empty). Since the $B$ parts cancel in $\Gamma$, the remaining moment comparison depends only on that exceptional component.

Strict moment order implies strict resolvent order under the following explicit bound. Each exceptional part has at most 60 vertices. Thus for two candidates, the absolute difference of the normalized $j$th moments is at most $1680\,6^j$. The first strict gap occurs at $j=2$, with gap at least 14, or at $j=3$, with gap 28. The geometric tail is dominated for $x>77766$ in either case. Since $\rho>\sqrt{2n}$, this holds. Equation~\eqref{eq:resolvent} gives a strictly larger spectral radius unless the exceptional component is the indicated one. Isomorphism checks in the certificate establish the equality case; the bulk has zero defect only when every component is $B$. The constructions above establish admissibility.
\end{proof}

\section{A neighbourhood obstruction}\label{sec:obstruction}
We also use a necessary condition stronger than three-page-book exclusion.
\begin{lemma}[Neighbourhood obstruction]\label{lem:fan}
Suppose $K_2\vee Q$ is 1-planar. If $v\in Q$ has at least five neighbours, then $Q[N_Q(v)]$ contains no $P_4$ as a subgraph. In particular, $Q$ contains no $K_1\vee P_5$.
\end{lemma}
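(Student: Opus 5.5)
The plan is to work with a 1-drawing $\mathcal D$ of $K_2\vee Q$ and look at the local picture around the three vertices $z,w,v$. Suppose $v$ has neighbours $u_1,\dots,u_s$ in $Q$ with $s\ge5$, and suppose $Q[N_Q(v)]$ contains a path $a_1a_2a_3a_4$. Then in $K_2\vee Q$ the vertex $v$ is adjacent to $z$, $w$ and to all of $a_1,\dots,a_4$. Each $a_i$ is also adjacent to both poles. The consecutive pairs $a_ia_{i+1}$ are adjacent as well. So $K_2\vee Q$ contains the graph $K_3\vee P_4$ on $\{z,w,v\}\cup\{a_1,\dots,a_4\}$, together with at least one further common neighbour $b$ of $z,w,v$. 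My first step is to reduce the lemma to the claim that this configuration is not 1-planar.

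The configuration has $K_3\vee P_4$ on seven vertices, which has $3+12+3=18$ edges. Adding $b$ gives an eighth vertex with three more edges $bz,bw,bv$, for $21$ edges on $8$ vertices. The naive count is not enough, since $4\cdot8-8=24\ge21$. So I would argue directly, as in Theorem~\ref{thm:cover} and Lemma~3.5 of \cite{CHM}.

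Take a rich drawing (\cite[Lemma 4.2]{GHHMTZ}) of an edge-maximal 1-planar supergraph that is still a $K_2$-join. Consider the triangle $zwv$. Every one of the at least five vertices $a_1,\dots,a_4,b$ is a common neighbour of all three of its vertices. The key intermediate claim is that, in any 1-drawing, the common neighbours of a triangle split into at most two groups, one on each side of the closed curve formed by the triangle. Moreover, a side containing three or more common neighbours forces those neighbours into a nested ``fan'' order. In that order, consecutive vertices are separated by the spokes from $z,w,v$.

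Concretely, I would use the fact that the three pole-like vertices $z,w,v$ together with their common neighbours form a $K_3\vee I_r$. The 1-planar drawings of $K_3\vee I_r$ for $r\ge5$ are extremely rigid: $K_{3,r}$ is 1-planar only for $r\le6$, and for $r\ge5$ its drawing is essentially unique. The argument then proceeds in three steps:
\begin{enumerate}
\item Show that in that drawing, at most one edge between common neighbours can be added without a second crossing on some spoke.
\item Alternatively, show that the edges among the $a_i$ that can be added form a matching, or a path of length at most two.
\item Conclude that a $P_4$ among them produces an edge crossed twice.
\end{enumerate}
The ``in particular'' clause is then immediate. In $K_1\vee P_5$ the apex has five neighbours, and they induce a $P_5\supseteq P_4$.

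The main obstacle is the rigidity step: classifying the 1-drawings of $K_3\vee I_5$ (or of $K_{3,5}$ plus the triangle) precisely enough to read off which chords among the five neighbours survive. I would first try the face-counting route used in Theorem~\ref{thm:cover}. Remove one spoke from each spoke–spoke-type crossing to obtain a plane base. Then observe that the plane $K_{3,r'}$ base has faces that are quadrilaterals $x\,u_i\,y\,u_j$ with $x,y\in\{z,w,v\}$. A chord $u_iu_j$ must stay within one face or cross exactly one base edge into a neighbouring face. In the plane drawing of $K_{3,r'}$ the dual structure is a cyclic-like arrangement, so each $u_i$ can reach only its two neighbours in the cyclic order. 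That gives at most a cycle-like chord graph. One must then use that $z,w,v$ are mutually adjacent: the three triangle edges consume crossings and block enough faces to cut the possible chords down to a $P_3$. Making this final blocking count rigorous is where I expect the real work to lie.
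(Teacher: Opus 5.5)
\textbf{There is a genuine gap: the key non-1-planarity claim is left unproved.}

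Your reduction is correct and is the same as the paper's. A path $a_1a_2a_3a_4$ in $Q[N_Q(v)]$, a fifth neighbour $b$, and the vertices $z,w,v$ span a subgraph of $K_2\vee Q$ containing $K_3\vee(P_4\cup K_1)$. Since subgraphs of 1-planar graphs are 1-planar, it suffices that this eight-vertex graph is not 1-planar, and the ``in particular'' clause then follows as you say. But that non-1-planarity is the entire content of the lemma, and you leave it unproved, as you acknowledge. The paper does not prove it from scratch either: it is exactly \cite[Lemma 2.8]{CHM} of Czap--Hud\'ak--Madaras. Citing it finishes the proof in two lines.

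The direct route you sketch would also fail as written. The plane base you want does not exist once $r'\ge3$: a plane $K_{3,r'}$ whose faces are quadrilaterals $x\,u_i\,y\,u_j$ with a cyclic dual is impossible because $K_{3,3}$ is nonplanar. Indeed, the spokes from $z,w,v$ to five common neighbours form a $K_{3,5}$, whose crossing number is $4$, so at least four spoke--spoke crossings are forced. Any plane base that keeps all three spokes at each retained common neighbour therefore retains at most two such neighbours, and so cannot see a $P_4$ among them. Deleting only spokes instead destroys the uniform quadrilateral face structure that your chord count relies on. In Theorem~\ref{thm:cover} the base trick works because, with only two poles, every spoke--spoke crossing lies in a kite containing $zw$, so at most two such crossings occur. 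With three mutually adjacent poles the combinatorics are genuinely different.

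Your other two claims are asserted without argument. These are the ``essential uniqueness'' of the 1-drawing of $K_3\vee I_5$ and the nested fan order of common neighbours on each side of $zwv$. A self-contained proof would need a real case analysis of 1-drawings of $K_{1,1,1,5}$ and of how chords among the five neighbours can be added. Otherwise, cite \cite[Lemma 2.8]{CHM}.
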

\begin{proof}
Czap--Hud\'ak--Madaras \cite[Lemma 2.8]{CHM} prove that $K_3\vee(P_4\cup K_1)$ is not 1-planar. A $P_4$ in $N_Q(v)$ and a fifth neighbour, together with $v,z,w$, would give this forbidden subgraph; unwanted edges may be discarded. The final assertion follows since $P_5$ contains $P_4\cup K_1$ as a non-induced subgraph.
\end{proof}
The obstruction applies uniformly to the following two endpoint families. For $m=5p+7$, $p\ge1$, let
\[
 T_m^{(0)}=A_m-35-(m-2)m,\qquad
 T_m^{(1)}=A_m-35-(m-2)(m-1).
\]
On the last six vertices, relabelled $1,\ldots,6$, vertex 3 is adjacent to all other five. The other five form respectively the paths
\[
 1,2,4,5,6\qquad\hbox{and}\qquad1,2,4,6,5.
\]
Thus both graphs contain $K_1\vee P_5$, independently of $p$. The six-vertex witnesses therefore exclude these families for every $p\ge1$.

\section{A second-moment deletion inequality}\label{sec:deletion}
For $m\ge12$ the degree sequence of $A_m$ is
\[
 (3,3,4,5,\underbrace{4,\ldots,4}_{m-8},5,4,3,3),
 \qquad W_2(A_m)=16m-10.
\]
For an edge $e$ let $c(e)$ be its number of canonical windows, and put
$b(e)=2\sum_{v\in e}d_{A_m}(v)-2$.
\begin{lemma}[Window--moment inequality]\label{lem:windowmoment}
Delete a set $F$ of $h$ edges from $A_m$, and let $u$ be the number of canonical windows destroyed. Then
\[
 L:=W_2(A_m)-W_2(A_m-F)\ge4u+2h.
\]
\end{lemma}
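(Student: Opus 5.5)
The plan is to compute $L$ exactly from degrees and then absorb the quadratic correction into an inclusion--exclusion count of destroyed windows. Write $f_v$ for the number of edges of $F$ at $v$, and $\pi$ for the number of unordered pairs of edges of $F$ sharing a vertex. Since $W_2(G)=\sum_v d_G(v)^2$,
\[
 L=\sum_v\bigl(2d_{A_m}(v)f_v-f_v^2\bigr).
\]
We have $\sum_v 2d_{A_m}(v)f_v=\sum_{e\in F}(b(e)+2)$ and $\sum_v f_v^2=2h+2\pi$. Hence
\[
 L=\sum_{e\in F}b(e)-2\pi .
\]
So it suffices to prove $\sum_{e\in F}(b(e)-2)-2\pi\ge4u$.

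For the upper bound on $u$, let $k_W\ge0$ be the number of edges of $F$ lying in a canonical window $W$ (Lemma~\ref{lem:windows}). Every destroyed window has $k_W\ge1$, so
\[
 u\le\sum_W\min(k_W,1)=\sum_{e\in F}c(e)-\sum_W(k_W-1)^+ .
\]
Put $s(e):=b(e)-2-4c(e)$. The target then reduces to
\begin{equation*}
 \sum_{e\in F}s(e)+4\sum_W(k_W-1)^+\ge2\pi .
\end{equation*}
The degree sequence displayed above and the incidence counts of Lemma~\ref{lem:windows} give $s(e)$ for every edge. For interior distance-two edges (both ends of degree $4$, $c=3$) we get $s=0$. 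For interior distance-one edges ($c=2$) we get $s=4$. The end chords $14$ and $(m-3)m$ have $c=0$ and hence large positive $s$. The near-end edges require a short explicit table using degrees $3,3,4,5$, which I would tabulate and check all to be nonnegative.

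The core step is a charging argument: assign each adjacent pair $\{e,e'\}\subseteq F$ to a window containing both edges, and verify capacity. Among interior edges, two edges sharing a vertex $v$ always lie in a common canonical window. Indeed, $v(v+2)$ and $v(v-2)$ lie in the window starting at $v-2$, $v(v+2)$ and $v(v+1)$ lie in the window starting at $v$, and the remaining combinations are checked similarly. Inside one window, a $5$-cycle, the adjacent pairs among its $k_W$ deleted edges number at most $k_W-1$ for $k_W\le4$ and $5$ for $k_W=5$. Thus $2\cdot\#\text{pairs}\le4(k_W-1)$ in every case. Summing over windows handles every pair that is charged to a window.

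The main obstacle is the two ends of $A_m$. There some adjacent pairs lie in no common canonical window: pairs involving the extra chords $14$ or $(m-3)m$, which meet no window, and pairs at the degree-$5$ vertices $4$ and $m-3$. These pairs must be paid from the positive slack $s(e)$ of the end edges instead. Because $m\ge12$, the two ends are disjoint and the configuration near each end is a fixed finite graph on the first (or last) six or so vertices. I would therefore verify directly that $s(e)$ summed over deleted end edges covers twice the uncharged pairs at that end. I would try this first by hand, using the degree-$5$ vertices' extra slack, and fall back on a small exhaustive check over subsets of the end edges if the hand case split becomes unwieldy.
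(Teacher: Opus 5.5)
Your reduction is correct. By \eqref{eq:loss} and $u=\sum_W\min(k_W,1)$, the lemma is equivalent to $\sum_{e\in F}s(e)+4\sum_W(k_W-1)^+\ge2\pi$. Your slack values are also right: $s=0$ for interior distance-two edges, $s=4$ for interior distance-one edges, and $s(14)=12$.

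The gap is in the charging step. It is false that two interior edges sharing a vertex always lie in a common canonical window. The windows through an interior vertex $v$ start at $v-4,\dots,v$. The neighbour pairs of $v$ they use are $\{v-2,v-1\}$, $\{v-2,v+1\}$, $\{v-2,v+2\}$, $\{v-1,v+2\}$ and $\{v+1,v+2\}$; the pair $\{v-1,v+1\}$ never occurs. So two consecutive path edges $(v-1)v$ and $v(v+1)$ share $v$ but lie in no common window. For $F=\{(v-1)v,\,v(v+1)\}$ one has $\pi=1$ and $\sum_W(k_W-1)^+=0$. Deleting a long run of path edges produces arbitrarily many such pairs in the interior. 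This is not an end effect, and your window-capacity step does not pay for these pairs.

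The step can be repaired inside your framework. Each interior distance-one edge has $s=4$ and lies in at most two such pairs, so charging $1$ per pair to each of its two edges suffices. The same uncharged pairs also occur at the ends, for example $\{12,23\},\{23,34\},\{34,45\}$. There are further uncharged pairs there: $\{13,23\},\{13,34\}$ at vertex $3$, $\{24,34\}$ at vertex $4$, and the six pairs through the chord $14$. All of these must be paid from $s(12)=4$, $s(13)=6$, $s(14)=12$, $s(23)=6$, $s(24)=4$, $s(34)=10$, $s(45)=6$. The budget does close, with the chord $14$ exactly absorbing its six pairs. However, your proposal neither identifies these pairs nor carries out the end check.

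The paper avoids this bookkeeping with a sequential argument. It deletes the edges of $F$ one at a time. If the current edge $ab$ lies in $q$ still-intact canonical windows, the other neighbours of $a$ in those windows are pairwise distinct, and likewise for $b$. Hence the current degrees of $a$ and $b$ are at least $q+1$, and the step costs $2d(a)+2d(b)-2\ge4q+2$. Each destroyed window is counted at its first deleted edge, so summing gives $L\ge4u+2h$ directly. This needs no end analysis and no special treatment of the chords.
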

\begin{proof}
Delete edges in any order. Suppose the current edge $ab$ lies in $q$ still-present canonical pentagons. In those pentagons, the other neighbours of $a$ are pairwise distinct, and the same is true for $b$. For example, if $ab=j(j+2)$, the possible window starts are $j-2,j-1,j$; the other neighbours of $j$ are $j-2,j-1,j+1$, and those of $j+2$ are $j+1,j+3,j+4$. For $ab=j(j+1)$ the two possible starts are $j-3,j$, again giving distinct other neighbours. An extra end chord lies in no canonical window. Consequently the current degrees at $a,b$ are each at least $q+1$. Removing $ab$ decreases the sum of squared degrees by
\[
 2d(a)+2d(b)-2\ge4q+2.
\]
Each destroyed window is counted exactly at its first removed edge. Summing proves the claim.
\end{proof}
For later exact checks, if $r_v$ counts deleted edges incident with $v$, then
\begin{equation}\label{eq:loss}
 L=\sum_v(2d_{A_m}(v)r_v-r_v^2)
   =\sum_{e\in F}b(e)-2\sum_v\binom{r_v}{2}.
\end{equation}

\section{Low-excess classification}\label{sec:local}
Write
\[
 (c_0,c_1,\ldots,c_6)=(46,78,68,86,90,108,112).
\]
\begin{theorem}[Local lexicographic ordering]\label{thm:local}
Let $Q$ be connected with a 1-planar $K_2$-join, let $p=\nf(Q)\ge1$, and let
\[
 \varepsilon=\delta(Q)-(1-10p)\in\{0,\ldots,6\}.
\]
Then
\[
 \Psi_2(Q)\le110p-c_\varepsilon,
 \qquad
 \Psi_2(C_{p,\varepsilon})=110p-c_\varepsilon.
\]
Moreover the normalized moment sequence $(\Psi_2,\ldots,\Psi_5)$ is lexicographically maximized uniquely, up to isomorphism, by $C_{p,\varepsilon}$ of Section~\ref{sec:constructions}.
\end{theorem}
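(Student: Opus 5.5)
The proof reduces everything to the upper envelope $A_m$ and measures losses in $W_2$ using Lemma~\ref{lem:windowmoment}. First, confine $Q$. By Theorem~\ref{thm:defect}(\ref{item:defect-size}), $m=v(Q)\le5p+10+3\varepsilon\le5p+28$. Equation~\eqref{eq:wrapped} gives $\delta\ge6-10p+h$ in the wrapped family, which forces $\varepsilon\ge5+h$; those cases are treated separately at the end. So assume $Q=A_m-F$ with $|F|=h$ and $7-2m+7h=1-10p+\varepsilon$, that is,
\[
 7h=2m-6-10p+\varepsilon.
\]
Because $p\ge1$, Lemma~\ref{lem:windows} implies that at least $u\ge m-4-5p$ windows are destroyed. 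Also $W_2(A_m)=16m-10$ for $m\ge12$, and $W_2(B)=90$. Hence
\[
 \Psi_2(Q)=7(16m-10-L)-90m=22m-70-7L .
\]
Lemma~\ref{lem:windowmoment} gives $L\ge4u+2h\ge4(m-4-5p)+2h$. Substituting and using the relation for $h$ yields
\[
 \Psi_2\le 140p-6m+42-2(2m-6-10p+\varepsilon),
\]
which is linear and decreasing in $m$. This bound is crude, because the end books force specific deletions, so I would sharpen it through the exact loss formula~\eqref{eq:loss}. The two end three-page books (Theorem~\ref{thm:defect}) each force a deleted edge. Edges with $c(e)<3$ waste window coverage, and deleted edges sharing a vertex reduce $L$ only by $2\binom{r_v}{2}$. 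Writing $u=(m-4-5p)+s$ with slack $s\ge0$, and recording the shortfall of each deleted edge from covering three windows, I expect a bound of the form $\Psi_2\le110p-c_\varepsilon-\kappa\cdot(\text{total slack})$ with $\kappa>0$. The constants $c_\varepsilon$ come from the cheapest way to spend the excess $\varepsilon$: lengthening or shortening $m$ by one changes $\varepsilon$ by $\pm2$ at fixed $h$ (compare the orders $M+3,M-1,M+2,\dots$ in the table), and deleting an extra distance-two edge such as $57$ or $68$ costs $7$ in defect.

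Next comes the equality analysis. For each $\varepsilon\le6$, only finitely many configurations have zero slack. These are finitely many up to the position of local deletions, since the interior of a path square is translation invariant. The argument is local: the deletions must sit near the two ends, except that one interior distance-two edge may be deleted (cases $E=1,3,5$). I would enumerate these local patterns near each end. Each pattern is a deletion set inside a window of bounded length, so the list is independent of $p$. Lemma~\ref{lem:fan} excludes the endpoint families $T^{(0)}_m$ and $T^{(1)}_m$, and \cite[Lemma 3.5]{CHM} excludes the remaining book patterns. For each survivor I would compute $\Psi_2,\dots,\Psi_5$. Since interior translation changes moments only through finitely many boundary terms, these become explicit affine functions of $p$. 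Comparing them lexicographically should single out $C_{p,\varepsilon}$. A direct count of degrees verifies $\Psi_2(C_{p,\varepsilon})=110p-c_\varepsilon$; for example, $R^{(2)}_M$ gives $22M-70-7L$ with $L$ equal to the loss from deleting $35$ and its mirror. The small orders $m<12$ and the wrapped family $\varepsilon\ge5$ are handled as finitely many further cases, in the same way as the finite verification in Lemma~\ref{lem:finitecomponents}.

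The main obstacle is the sharpened slack inequality, which must be uniform in $p$. It has to show that every deviation from the extremal pattern, whether an overlapping window cover, adjacent deletions, or an uneven split between the ends, costs strictly in $\Psi_2$. Where it only ties, the deviation must lose at some higher moment up to $\Psi_5$. I would first attempt this with a potential argument that charges each window to its first deleted edge, as in the proof of Lemma~\ref{lem:windowmoment}, and tracks the wasted capacity $3-q$. The ties at $\Psi_2$ that must be broken by $\Psi_3$ through $\Psi_5$ would be resolved by a local computation, because the moment differences are supported within distance $j$ of the deletions.
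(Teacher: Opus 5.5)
Your framework matches the paper's: the envelope $A_m$, Lemma~\ref{lem:windowmoment}, the end books together with Lemma~\ref{lem:fan}, and local higher-moment tie-breaks. Your crude bound is exactly \eqref{eq:roughpsi}: with $m=5p+3+(7h-\varepsilon)/2$ it reads $\Psi_2\le110p+24-35h+3\varepsilon$. However, the step you call the main obstacle is the entire content of the theorem, and the proposal only states an expectation that some slack inequality with $\kappa>0$ holds.

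That crude bound already eliminates every $h\ge4$ except $(\varepsilon,h)=(6,4)$. What is actually needed is an exact analysis of $h=2$, of $h=3$, and of that one case, and none of these is carried out.
\begin{itemize}
\item \emph{The case $h=2$.} You must show that each end deletion costs at least $14$ in $L$. This is false for the cost-$12$ edges $13,23$. At $\varepsilon=6$ they give $T^{(0)}_m,T^{(1)}_m$ with $L=26$, which would beat $c_6$. So Lemma~\ref{lem:fan} carries essential weight here.
\item \emph{The case $h=3$.} The crude bound is off by $14$ at $\varepsilon=3$ and by $42$ at $\varepsilon=5$. Closing this gap requires the explicit table of nearby pairs. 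In particular, a fan-bad end can only be repaired by deleting $46$, which then either costs too much or destroys too few windows.
\item \emph{The case $(\varepsilon,h)=(6,4)$.} This needs its own overlap count, giving $L\ge52$.
\item \emph{Packing exclusions.} The inequality $u\ge m-4-5p$ is only necessary for $\nf\le p$. For example, $\{35,\overline{24}\}$ at $\varepsilon=2$ satisfies it with $L=28$, yet it is isomorphic to $R^{(1)}_m$, whose packing number is $p+1$. Likewise $\{35,56,\overline{35}\}$ and $\{35,57,\overline{24}\}$ tie $C_{p,5}$ in $\Psi_2$. These must be removed by computing the true packing number, not by any moment comparison.
\end{itemize}

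Your treatment of the wrapped family is wrong as stated. It is not finite. The condition $\varepsilon\ge5+h$ and parity leave $(\varepsilon,h,m)=(5,0,5p+4)$ and $(6,1,5p+7)$, which gives a competitor for every $p$. An enumeration in the style of Lemma~\ref{lem:finitecomponents} therefore cannot settle it.

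Worse, the competitor $U_p:=W_{5p+4}$ (the wrapped envelope itself, with no deletion) is not beaten early. The graphs $U_p\cup B$ and $C_{p,5}$ have equal order and equal walk counts in degrees $0$ through $4$, so $\Psi_2,\Psi_3,\Psi_4$ all tie. This is precisely why the theorem runs to $\Psi_5$. You need a uniform-in-$p$ evaluation of $\sum_{ab\in E}s_as_b$ with $s=A^2\one$, computed from the boundary profiles, including the correction at $p=1$ where the two ends interact (Lemma~\ref{lem:fifth}). The resulting gap in favour of $C_{p,5}$ is $16$, or $18$ when $p=1$.

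The $\varepsilon=6$ wrapped case also needs an argument. The deleted edge must join two degree-three vertices, and every such choice still leaves $p+1$ disjoint windows.

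The other ties you anticipate are likewise specific and must be computed:
\begin{itemize}
\item At $\varepsilon=6$, $\Psi_3$ selects the one-cap graph $R^{(1)}_{M-3}$ over $R^{(2)}_{M-3}$.
\item At $\varepsilon=1$, the $p+1$ admissible positions $j=6+5s$ of the third deletion tie through $\Psi_3$. They are separated only at the fourth walk count (Lemma~\ref{lem:fourth}).
\end{itemize}
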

The proof uses the covering theorem, the two necessary obstructions, and bounded local case analysis within the book-free relaxation; admissibility of the maximizing cores is supplied by Section~\ref{sec:constructions}.

\subsection{Small orders and the deletion budget}
For order at most five, a positive-packing graph has order five; nine edges already force a three-page book. On six vertices a graph with a 1-planar $K_2$-join has at most ten edges. Indeed, if it has a universal vertex and at least eleven edges, the five-vertex neighbourhood has at least six edges and contains $P_4$, contrary to Lemma~\ref{lem:fan}. Otherwise its complement has minimum degree at least one and at most four edges. An isolated $K_2$ component in the complement gives $K_{4,2}$ in the graph, forbidden by \cite[Lemma 3.5]{CHM}. With no such component the complement must be $2P_3$; the two leaves of one $P_3$ give an edge with three common neighbours in the graph. This is also forbidden. Thus these orders cannot have $\varepsilon\le6$.

For the augmented-envelope case with $m\ge7$, set $F:=E(A_m)\setminus E(Q)$ and $h:=|F|=e(A_m)-e(Q)$. Let $u$ be the number of canonical windows destroyed by $F$, and put $L:=W_2(A_m)-W_2(Q)$. For orders seven and eight, at least one edge must be deleted from $A_m$ to avoid a book, which again excludes low positive-packing excess. For order nine the two end books are edge-disjoint, so at least two deletions are necessary. The only possible low-excess order-nine case is the wrapped graph treated below. For $m=10,11$, the two end books are edge-disjoint, so $h\ge2$ and
\[
 \varepsilon=6+10p-2m+7h\ge30-2m\in\{10,8\},
\]
contrary to $\varepsilon\le6$. In the augmented case, therefore, $m\ge12$ and there must be at least two deletions.

The defect equation and window bound give
\begin{equation}\label{eq:heps}
 m=5p+3+\frac{7h-\varepsilon}{2},\qquad
 2\le h\le\varepsilon+2,\qquad h\equiv\varepsilon\pmod2,
 \qquad u\ge\frac{7h-\varepsilon-2}{2}.
\end{equation}
By Lemma~\ref{lem:windowmoment},
\begin{equation}\label{eq:roughpsi}
 \Psi_2(Q)=110p-4+77h-11\varepsilon-7L
 \le110p+24-35h+3\varepsilon.
\end{equation}
Every permitted $h\ge4$ is already strictly inferior to the claimed row, except $(\varepsilon,h)=(6,4)$. Thus only two deletions, three deletions, and this one four-deletion case require further attention.

\subsection{Two deletions}
The left end book has the following seven edges. Here $I(e)$ is the set of starting positions of the canonical windows containing $e$, so $c(e)=|I(e)|$. The right table is its reflection; bars below denote $i\mapsto m+1-i$.
\begin{center}\begin{tabular}{c|rrrrrrr}\toprule
$e$&13&14&23&24&34&35&45\\
\midrule
$I(e)$&$\{1\}$&$\varnothing$&$\{2\}$&$\{1,2\}$&$\{3\}$&$\{1,2,3\}$&$\{1,4\}$ \\
$c(e)$&1&0&1&2&1&3&2\\
$b(e)$&12&14&12&14&16&14&16\\
\bottomrule
\end{tabular}\end{center}
With just two deletions, one must occur in each book. The two ends have disjoint incident vertex sets. Deleting only 13 or only 23 at the left end leaves $K_1\vee P_5$ on positions $1,\ldots,6$, and the distant deletion cannot remove it. These choices are forbidden. Each allowed end deletion therefore costs at least 14, and equality requires one of $14,24,35$. Deleting 24 is isomorphic to deleting 14 by exchanging vertices 1 and 2; neither has other neighbours outside the first four vertices. Thus every second-moment maximizer is isomorphic to one of
\[
 P_m^2,\qquad R_m^{(1)},\qquad R_m^{(2)}.
\]
Their pentagon packing numbers are respectively
\[
 \lfloor m/5\rfloor,\qquad\lfloor(m-3)/5\rfloor,
 \qquad\lfloor(m-6)/5\rfloor.
\]
Since $m=5p+10-\varepsilon/2$, for $\varepsilon=0,2,4$ only the last is admissible. For $\varepsilon=6$, $R_m^{(1)}$ and $R_m^{(2)}$ are admissible. Directly from $W_3=2\sum_{ab\in E}d(a)d(b)$, for $m\ge12$,
\[
 W_3(R_m^{(1)})=64m-206,\qquad
 W_3(R_m^{(2)})=64m-212.
\]
Thus the one-cap graph wins strictly in the last row. In all four rows, $L=28$ gives $\Psi_2=110p-46-11\varepsilon$, exactly as asserted.

\subsection{Three deletions: a bounded local table}
Here $m\ge16$ and $\varepsilon=1,3,5$. The required values of $u$ are 9, 8, 7. Using the preceding seven-edge table and \eqref{eq:loss}, with the book and neighbourhood obstructions, gives the following complete equality table. Reflections are understood.
\begin{center}\small\begin{tabular}{ccl}\toprule
required $u$&least $L$&all equality patterns\\
\midrule
$u\ge9$&42&$\{35,\overline{35},j(j+2)\}$, $6\le j\le m-7$\\
$u\ge8$&40&$\{35,57,\overline{35}\}$\\
$u\ge7$&40&$\{35,57,\overline{35}\}$; $\{35,56,\overline{35}\}$;\\
&&$\{35,57,\overline{24}\}$\\
\bottomrule
\end{tabular}\end{center}
Here is a complete reduction to the displayed cases. The two end books have disjoint vertex sets. A third edge outside both books cannot meet a deleted edge at both ends, because each edge of $A_m$ has length at most two except the two end chords, which belong to their books. The only edges outside the left book meeting one of its vertices are
\[
 12,\quad46,\quad56,\quad57.
\]
The reflected statement holds at the right. All other third edges have no endpoint interaction with either end deletion and have cost at least 14.

For $u\ge9$, each of the three deletions must destroy three windows, and the three sets of windows must be disjoint. Exactly one deletion lies in each book, since the union of two book edges contains at most four windows. The only book edge of capacity three is $35$. The third deletion is therefore $j(j+2)$ with $6\le j\le m-7$. All endpoint sets are disjoint and each cost is 14, giving $L=42$.

For $u\ge7$, first consider two deletions in the same book. The only pair hitting four different windows is $35,45$, with window set $\{1,2,3,4\}$ and joint loss $14+16-2=28$. Every other such pair hits at most three windows. The lone deletion at the opposite end cannot be $13$ or $23$ (reflected), since its neighbourhood obstruction would be untouched. It has cost at least 14, so this case has $L\ge42$ and cannot attain 40.

It remains to take one deletion in each book and a third edge outside them. Deleting $13$ alone leaves, in the neighbourhood of vertex 4, the path $1,2,3,5$ and the fifth neighbour 6. Deleting $23$ alone leaves the path $1,3,5,6$ and the fifth neighbour 2. A third edge away from this end cannot remove these witnesses. Among the four nearby choices, $12$ and $56$ also leave a four-vertex path in the same neighbourhood, and $57$ leaves the displayed witnesses unchanged. Only $46$ can repair the obstruction. The pair $13,46$ has loss 28, and $23,46$ destroys only three windows. Thus a bad lone end never gives $u\ge7$ and $L\le40$.

Table~\ref{tab:localpairs} lists every nearby pair $(e,g)$, with $e$ in the left book and $g\in\{12,46,56,57\}$, whose union destroys at least four windows. Pairs hitting at most three windows cannot reach seven after the opposite deletion. The final column records the neighbourhood obstruction used in the reduction.

\begin{table}[H]
\centering\small
\begin{tabular}{cccll}
\toprule
$e$ & $g$ & $I(e)\cup I(g)$ & pair loss & obstruction \\
\midrule
13&46&$\{1,2,3,4\}$&28&--\\
13&57&$\{1,3,4,5\}$&26&neighbourhood $P_4$\\
23&57&$\{2,3,4,5\}$&26&neighbourhood $P_4$\\
24&46&$\{1,2,3,4\}$&28&--\\
24&57&$\{1,2,3,4,5\}$&28&--\\
35&46&$\{1,2,3,4\}$&30&--\\
35&56&$\{1,2,3,5\}$&26&--\\
35&57&$\{1,2,3,4,5\}$&26&--\\
45&46&$\{1,2,3,4\}$&30&--\\
45&56&$\{1,2,4,5\}$&28&--\\
45&57&$\{1,3,4,5\}$&28&--\\
\bottomrule
\end{tabular}
\caption{Exact local exclusions for three deletions. Reflections give the right-end cases. The pair loss is calculated by~\eqref{eq:loss}; the surviving opposite-end deletion costs at least 14.}\label{tab:localpairs}
\end{table}

After the bad-end cases just excluded, if the third deletion has no endpoint interaction, each permissible end deletion costs at least 14 and $L\ge42$. If there is an interaction, the table shows that $L\le40$ forces $\{e,g\}=\{35,56\}$ or $\{35,57\}$, of loss 26, with a cost-14 opposite deletion. The latter is one of $\overline{14},\overline{24},\overline{35}$; it hits respectively zero, two, or three windows. Therefore $35,56$ can reach seven only with $\overline{35}$, while $35,57$ reaches eight with $\overline{35}$ and seven with $\overline{24}$. These are exactly the three listed equality patterns. This proves both the lower bounds and the completeness of their equality cases.
For $\varepsilon=3$ the unique pattern is the claimed $C_{p,3}$. For $\varepsilon=5$, the pattern with 56 leaves the disjoint canonical windows starting at $4,9,\ldots,5p+4$, so its packing exceeds $p$. The pattern with $\overline{24}$ is isomorphic, at that end, to a plain path-square tail. After the cut created by 57, that tail has $m-5=5p+6$ vertices, also giving at least $p+1$ pentagons. Only $C_{p,5}$ and its reflection remain. The resulting costs are $c_3=86$ and $c_5=108$.

For $\varepsilon=1$ the cut created by $j(j+2)$ divides the central strip into path squares of orders $a=j-2$ and $b=m-j-3$, sharing their endpoint. Here $a+b=5p+8$. Their packing is $p$ exactly when $a\equiv b\equiv4\pmod5$; otherwise it is $p+1$. To justify attainability of the latter floor sum, at least one of $a,b$ is not divisible by five, so a maximum packing on that side can avoid the shared endpoint. Hence the allowed positions are
\[
 j=6+5s,\qquad 0\le s\le p.
\]
All have the same second and third moments, but the fourth moment singles out the two reflected end positions.

\begin{lemma}[Fourth-moment tie]\label{lem:fourth}
In $R_m^{(2)}$, $m=5p+13$, deleting $(6+5s)(8+5s)$ decreases $(W_2,W_3,W_4)$ by
\[
 \begin{cases}(14,80,398),&s=0\text{ or }s=p,\\
 (14,80,408),&0<s<p.
 \end{cases}
\]
\end{lemma}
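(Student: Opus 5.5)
The plan is to compute the three moment losses by local expansion around the deleted edge $e=ab$, with $a=6+5s$ and $b=8+5s$. Write $G=R_m^{(2)}$ and $G'=G-e$. Every walk count changes only through walks that use $e$, and $G$ looks like a plain path square $P^2$ within distance two of $e$ except near the two $K_4$ caps. So I would express each loss as a sum over walks of length at most four that traverse $e$ at least once. Such a walk stays within distance three of $\{a,b\}$, and the decrease depends only on the induced neighbourhood of radius about three around $e$.

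The second and third moments I would handle by formulas. For $W_2$, equation \eqref{eq:loss} with a single deletion gives $L=b(e)$, computed from degrees in $G$. Both endpoints have degree $4$ in the central strip, provided $6\le a$ and $b\le m-5$; this holds for $0\le s\le p$ because $m=5p+13$. Hence the loss is $2(4+4)-2=14$, uniformly in $s$.

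For $W_3=2\sum_{xy\in E}d(x)d(y)$ I would compute the change directly. The loss equals
\[
 2d(a)d(b)+2\sum_{y\sim a,\,y\ne b}d(y)+2\sum_{y\sim b,\,y\ne a}d(y)-(\text{correction}),
\]
and this gives an expression depending only on degrees within distance one of $e$. The degree-$5$ vertices of $A_m$ sit at positions $4$ and $m-3$, and the deletions $35$ and $(m-4)(m-2)$ adjust them. I would first record the degree sequence of $R_m^{(2)}$: it is $3,3,3,4,3,4,\ldots$ at the left end and the mirror at the right. Then I would check that the neighbours of $a,b$, namely positions $a-2,\ldots,b+2=4+5s,\ldots,10+5s$, all have degree $4$ even for $s=0$. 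Position $4$ has degree $4$ in $R^{(2)}$ after deleting $35$, since it loses nothing and had degree $5$ only through $14$; I need to verify this carefully. This yields the common value $80$.

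The fourth moment is where $s$ matters, and I expect it to be the main obstacle. I would use $W_4(H)=\sum_v d_2(v)^2$, where $d_2(v)=\sum_{y\sim v}d(y)$ is the number of walks of length two from $v$. Deleting $e$ changes $d_2(v)$ only for $v\in N[a]\cup N[b]$. At $a$ and $b$ the change is $-(\text{degree of the other endpoint})$ plus the effect of the degree drop of the other endpoint's neighbours. At a neighbour $y$ of exactly one endpoint the change is $-1$, and at a common neighbour it is $-2$. Then
\[
 \Delta W_4=\sum_v\bigl(2d_2(v)\,\delta_v-\delta_v^2\bigr)
\]
with $\delta_v$ the drop in $d_2(v)$, so only the values $d_2(v)$ for $v$ within distance two of $e$ are needed. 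In the interior every such vertex has $d_2=16$, which gives the value $408$. For $s=0$ the window $4,\ldots,10$ reaches the left cap, whose $d_2$ values differ (position $4$ or $5$ is adjacent to degree-$3$ cap vertices), and I expect a total deficit of $10$, giving $398$. The case $s=p$ follows by the reflection $i\mapsto m+1-i$, which maps the deleted edge $(5p+6)(5p+8)$ to $6\,8$ because $m+1=5p+14$. Finally I would double-check the interior value $408$ and the boundary value $398$ by a direct small-case computation, for instance $p=1$, $m=18$, where $s\in\{0,1\}$ are both boundary cases, and $p=2$ for the interior case.
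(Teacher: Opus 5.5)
Your framework is the same as the paper's: with $d=A\one$ and $d_2=Ad$, the three losses are $2(d_a+d_b)-2$, then $2(d_2(a)+d_2(b))+16$, then $2d_2^{\top}\delta-\|\delta\|^2$, where $\delta=(1,1,4,2,4,1,1)$ on positions $j-2,\ldots,j+4$ and $j=6+5s$. Two small corrections to your setup. The ``correction'' term in your $W_3$ expression is zero, so only the two values $d_2(a),d_2(b)$ matter, not the individual neighbour degrees. The drop at an endpoint is exactly $\delta_a=d(b)=4$, with no further contribution. The interior values $14,80,408$ then follow once you check one thing for $0<s<p$: the neighbours of the window, namely positions $j-4,\ldots,j+6$, lie in $\{7,\ldots,m-6\}$, where every degree is $4$. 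This check is needed to get $d_2\equiv16$ on the window.

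The gap is in the case $s=0$, which is the real content of the lemma. Your degree data are wrong. Deleting $35$ does not touch vertex $4$, which keeps its five neighbours $1,2,3,5,6$. So the left degree profile of $R_m^{(2)}$ is $(3,3,3,5,3,4,4,\ldots)$, not $(3,3,3,4,3,4,\ldots)$. Consequently the check you propose for $W_3$ fails: positions $4$ and $5$ have degrees $5$ and $3$, so not all of positions $4,\ldots,10$ have degree $4$. If you compute with your stated profile you get $d_2(6)=15$, hence losses $78$ and $388$, a deficit of $20$ rather than $10$. Your ``expected deficit of $10$'' is therefore asserted, not derived, and your own data would contradict it.

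The correct left $d_2$-profile is $(11,11,11,16,13,16,15,16,\ldots)$. Thus $d_2(6)=5+3+4+4=16=d_2(8)$, so the $W_3$ loss is still $80$; this is a cancellation, not because the neighbours all have degree $4$. In the fourth moment, $\delta_5=1$ and $\delta_7=2$ meet $d_2(5)=13$ and $d_2(7)=15$, while $d_2(4)=16$. The deficit is therefore $2(3\cdot1+1\cdot2)=10$, and the loss is $448-10-40=398$. This is exactly the paper's computation. The case $s=p$ then follows by the reflection you describe.
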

\begin{proof}
Let $A$ be the adjacency matrix of $R_m^{(2)}$. The endpoints have degree four. Put $d=A\one$, $a=A d$, and let $b$ be the sum of their unit vectors. After deletion,
\[
 d'=d-b,\qquad a'=a-c,\qquad c=A b+3b.
\]
The two endpoints have $a$-value 16, so the third-moment loss is $2(16+16)+16=80$. The seven nonzero coordinates of $c$, in their strip order, are
\[
 (1,1,4,2,4,1,1),\qquad \|c\|^2=40.
\]
At an interior allowed position, every corresponding $a$-coordinate is 16, giving $2a^Tc-\|c\|^2=408$. At the left end, $a_5=13$ and $a_7=15$ instead of 16, weighted by $c_5=1,c_7=2$; all other relevant values are 16. The loss decreases by $2(3+2)=10$. The right end is its reflection. Allowed interior positions are separated from both exceptional end neighbourhoods, also when $p=2$.
\end{proof}

\subsection{The remaining four-deletion case}
For $(\varepsilon,h)=(6,4)$, $m=5p+14\ge19$ and $u\ge10$. In fact
\begin{equation}\label{eq:fourloss}
 L\ge52.
\end{equation}
To see this, write $C_F=\sum_{e\in F}c(e)$ and $\ell=C_F-u$. Then
$\sum_e(3-c(e))+\ell=12-u\le2$.
An edge of capacity three is a distance-two edge and costs at least 14. Two such edges sharing a vertex overlap in a canonical window; the overlap positions for successive distance-two edges are distinct. If all four edges have capacity three, the number of shared-endpoint pairs is at most $\ell\le2$, and \eqref{eq:loss} gives $L\ge56-4$.

If one edge has capacity two and the others three, every edge still costs at least 14. A capacity-two edge sharing a vertex with a capacity-three edge overlaps a window. There is at most one overlap, so $L\ge54$. If two edges have capacity two, there is no overlap allowance. Only those two edges, if both are consecutive distance-one edges, can share a vertex without overlap; thus $L\ge54$. The last possibility is one capacity-one edge and three capacity-three edges, with no overlaps. The capacity-one edge costs at least 10, and it cannot share a vertex with a capacity-three edge without overlap. Hence $L\ge10+3\cdot14=52$. These cases exhaust the deficit budget of two. End chords have capacity zero and cannot occur. Equation~\eqref{eq:fourloss} gives
\[
 \Psi_2(Q)\le110p+238-7\cdot52=110p-126,
\]
strictly below the excess-six candidate. This completes the augmented-envelope cases.

\section{Wrapped competitors and the fifth moment}\label{sec:wrapped}
For a wrapped-envelope subgraph, the global window bound gives $h\le\varepsilon-5$. Parity and the defect equation leave only
\[
 (\varepsilon,h,m)=(5,0,5p+4),\qquad(6,1,5p+7).
\]
In the second case $W_2(W_m)=16m-28$. The four degree-three vertices are $1,2,m-1,m$; all others have degree four. To tie or exceed $110p-112$, the deleted edge must have both ends of degree three. It is one of $12,(m-1)m,1m$. The last deletion leaves $P_m^2$, with $p+1$ disjoint pentagons. Deleting either of the other two still leaves $p+1$ disjoint consecutive five-vertex windows. Thus every admissible excess-six wrapped remainder loses already at the second moment.

For excess five, put $U_p=W_{5p+4}$ and $V_p=C_{p,5}$. They have different orders, but $U_p\cup B$ and $V_p$ have the same order.
\begin{lemma}[Fifth-moment identity]\label{lem:fifth}
For every $p\ge1$, writing $\iota=\mathbf1_{\{p=1\}}$, the moments of $V_p$ and $U_p\cup B$ are
\begin{center}\begin{tabular}{c|ll}\toprule
$j$&$W_j(V_p)$&$W_j(U_p\cup B)$\\
\midrule
0&$5p+11$&$5p+11$\\
1&$20p+36$&$20p+36$\\
2&$80p+126$&$80p+126$\\
3&$320p+426$&$320p+426$\\
4&$1280p+1448$&$1280p+1448$\\
5&$5120p+4836+4\iota$&$5120p+4820+2\iota$\\
\bottomrule
\end{tabular}\end{center}
In particular the first strict difference is $18$ for $p=1$, and $16$ for every $p\ge2$.
\end{lemma}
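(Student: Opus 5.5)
We compute the walk counts $W_j=\one^\top A^j\one$ for $0\le j\le5$ directly, using the fact that both graphs are built from a long homogeneous path-square strip plus bounded end pieces. The key tool is locality. Write $W_j=\sum_v a_j(v)$, where $a_j=A^j\one$. Then $a_j(v)$ depends only on the ball of radius $j$ around $v$. In a path square, vertices move at most two positions per step, so this ball covers at most $2j$ strip positions on each side.

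In a strip that looks like $P^2_\infty$ near $v$, every such $v$ has $a_j(v)=4^j$. This value comes from the 4-regular infinite path square. Hence each graph contributes $4^j$ per vertex in its bulk, plus a finite correction from each end, supported on $O(j)$ vertices.

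Concretely, I would proceed as follows.

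\emph{Wrapped graph $U_p=W_{5p+4}$.} By the degree description in Section~\ref{sec:wrapped}, vertices $1,2,m-1,m$ have degree three and all others degree four. The chord $1m$ closes the strip into a near-cyclic structure with one defective seam. Hence $W_j(U_p)=4^j(5p+4)-\kappa_j$, where $\kappa_j$ is a constant computed from the seam neighbourhood, once $5p+4$ exceeds the locality range $4j+O(1)$.

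\emph{The graph $B$.} The moments of $B$ are given explicitly ($7,24,90,324,\dots$), and $W_4,W_5$ follow from $f_B$ or by direct expansion.

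\emph{The core $V_p=C_{p,5}=R^{(2)}_{5p+11}-57$.} This graph has a cut vertex at position $6$. It therefore decomposes into a $K_4$ cap, a small block of order at most four, and a central path-square block with a $K_4$ cap at the far end. Each end region contributes a fixed correction, so again $W_j(V_p)=4^j(5p+11)-\lambda_j$ for $p$ large enough that the two end neighbourhoods do not interact.

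The identity then reduces to checking $\lambda_j=\kappa_j-W_j(B)+7\cdot4^j$ for $j\le4$, together with the stated constants at $j=5$. For this I would pick one value of $p$ large enough for separation (say $p=3$, where $m\ge19>2\cdot 10$). On that instance I would compute the vectors $a_j$ by repeated multiplication $a_{j+1}=Aa_j$, and read off the end corrections. The $p$-linear coefficient is automatically $5\cdot4^j$, which matches $5p\cdot 4^j$ in the table ($20p,80p,320p,1280p,5120p$).

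\emph{Small cases.} For $p=1$, and at $j=5$ also $p=2$, the locality radius $2j=10$ may let the two ends of $V_p$ (or the seam of $U_p$) see each other. The extra terms $4\iota$ and $2\iota$ record exactly this interaction. I would verify $p=1$ and $p=2$ by direct finite computation, exactly as in Lemma~\ref{lem:fourth}, where the interior/end distinction was tracked by coordinates of $a$. The generic formula then holds for $p\ge2$ once I confirm that the $j=5$ end neighbourhoods of radius $10$ are disjoint. For $V_2$ of order $21$ this needs a small check rather than the crude bound.

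The main obstacle is this separation threshold at $j=5$, together with the bookkeeping of the end corrections. I would handle it by computing $a_5$ explicitly on $V_2,V_3$ and $U_2,U_3$ and checking that the differences $W_5(V_3)-W_5(V_2)$ and $W_5(U_3\cup B)-W_5(U_2\cup B)$ both equal $5120$. This confirms linearity from $p=2$ onward. The final statements follow by subtraction: $(4836+4)-(4820+2)=18$ for $p=1$, and $4836-4820=16$ for $p\ge2$.
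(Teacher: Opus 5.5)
\paragraph{Comparison with the paper.} Your route differs from the paper's. The paper never forms $A^5\one$. It uses $W_4=s^\top s$ and $W_5=2\sum_{ab\in E}s_as_b$ with $s=A^2\one$, and writes the end profiles of $s$ explicitly (for instance $(11,11,11,15,9,14,12,15,15)$ at the left of $V_p$). Setting $s=16\one+e$, it gets the closed forms
\[
 W_5(U_p)=1024l-3596+2\iota,\qquad W_5(V_p)=1024m-6428+4\iota,
\]
valid for every $p$ at once. Only two steps of locality are needed, so the interaction between the ends shows up as specific cross edges: $8$--$10$ and $9$--$10$ in $V_1$, and $4$--$6$ in $U_1$. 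It occurs only at $p=1$. Your plan, computing $a_j=A^j\one$ on a few instances and extrapolating by locality, is viable in principle. It needs a larger locality radius and leaves all the constants, which are the content of the lemma, to an unperformed computation. More importantly, as written, the step reducing all $p$ to finitely many instances is not justified.

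\paragraph{The gap in the reduction to finitely many $p$.} The inequality $m\ge19>2\cdot10$ is false. By your own radius-$2j$ criterion, neither $V_3$ nor $U_3$ is certified as separated. In $V_3$ the degree defects lie at positions $\le7$ and $\ge m-4=22$, and their radius-$10$ position windows overlap on $[12,17]$. In $U_3$, of order $19$, the radius-$10$ windows on the two sides of the seam already overlap. Nor does checking $W_5(V_3)-W_5(V_2)=5120$ ``confirm linearity from $p=2$ onward'': two data points are consistent with a line, but exact linearity for all larger $p$ must come from a locality statement with a verified threshold.

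\paragraph{How to repair it.} The correct criterion is that no walk of length $j$ meets two defect regions. Inclusion--exclusion over walks then gives $a_j^G(v)-4^j=(a_j^{G_L}(v)-4^j)+(a_j^{G_R}(v)-4^j)$, where $G_L,G_R$ are the one-ended semi-infinite models, and summing yields $W_j=4^jm-\lambda_j$ with $\lambda_j$ independent of $m$.
\begin{itemize}
 \item For $V_p$, the regions are positions $\le7$ and $\ge m-4$. They are at graph distance $\lceil(m-11)/2\rceil$, which exceeds $5$ exactly when $m\ge22$, i.e.\ $p\ge3$.
 \item For $U_p$, pass to the periodic cover: $P^2_{\mathbb Z}$ with all lifts of the edges $(l-1)1$ and $l2$ removed, which covers $U_p=C_l^2-\{(l-1)1,\,l2\}$. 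Consecutive seams there are at distance $\lceil(l-3)/2\rceil$, which exceeds $5$ exactly when $l\ge14$, i.e.\ $p\ge2$.
\end{itemize}
With these thresholds, you still need direct computation at $p=1$ for every $j\ge3$ and at $p=2$ for $W_5(V_2)$. Add one separated instance to fix the constants, and your argument is complete.
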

\begin{proof}
For a graph with adjacency matrix $A$, put $d=A\one$ and $s=A d$. Then
\[
 W_2=d^Td,\quad W_3=2\sum_{ab\in E}d_a d_b,\quad
 W_4=s^Ts,\quad W_5=2\sum_{ab\in E}s_a s_b.
\]
For $U_p$, of order $l=5p+4$, the $s$-values at positions $1,2,3,4$ are $10,11,14,15$, with the reflected values at the right end and 16 elsewhere. Write $s=16\one+e$. The degree-weighted deviation sum is $\sum d_v e_v=-90$. The products $e_a e_b$ on edges sum to 59 at each end, plus 36 on the wrap edge, and an additional 1 when $l=9$. The additional edge is the distance-two edge joining the two $-1$ boundary deviations. Therefore
\[
 W_5(U_p)=2[256(2l-2)-1440+154+\iota]
         =1024l-3596+2\iota.
\]
Also $W_5(B)=4320$.

For $V_p$, of order $m=5p+11$, the left $s$-profile at positions 1 through 9 is
\[
 (11,11,11,15,9,14,12,15,15),
\]
and the right profile at positions $m-6$ through $m$ is
\[
 (15,16,13,16,11,11,11).
\]
All other coordinates are 16. These profiles are disjoint even for $p=1$. Here $\sum d_v e_v=-150$. The edge products of deviations contribute 132 on the left and 78 on the right. For $m=16$ two additional edges join a left $-1$ deviation to the right $-1$ deviation, adding 2. For $m\ge21$ there is no such cross-term. Since $e(V_p)=2m-4$,
\[
 W_5(V_p)=2[256(2m-4)-2400+210+2\iota]
         =1024m-6428+4\iota.
\]
Substitution yields the last row, and the preceding rows follow from the same degree and $s$ profiles.
\end{proof}
This finishes the proof of Theorem~\ref{thm:local}. The wrapped component is a comparison graph in the normalized resolvent order.

\section{Defect condensation and the final classification}\label{sec:condensation}
Let $b_d$, $1\le d\le6$, be the costs from the shared exact finite Lemma~\ref{lem:finitecomponents}:
\[
 (b_1,\ldots,b_6)=(46,78,54,86,118,94).
\]
The following elementary inequalities hold whenever the indicated indices do not exceed six:
\begin{align}
 c_a+c_b&>c_{a+b+1},&&a,b\ge0,\ a+b+1\le6,\label{eq:merge}\\
 c_a+b_d&>c_{a+d},&&a\ge0,\ d\ge1,\ a+d\le6.\label{eq:absorb}
\end{align}
They are 12 and 21 finite integer inequalities, respectively, and each gap is at least 14. Under the defect identity~\eqref{eq:budget}, splitting positive packing between components, or retaining positive pentagon-free defect outside the packing component, incurs a strictly larger normalized second-moment penalty.

\begin{proof}[Proof of Theorem~\ref{thm:structure}]
Theorem~\ref{thm:kernel} applies because $N_0(t)\ge N_{\mathrm{red}}(t)$. Thus every extremal remainder has minimum defect $D^*=1-10k+E$, uses all $k$ units of pentagon packing, and has bounded non-$B$ part. Write the positive-packing components as $Q_1,\ldots,Q_c$, with
\[
 p_i=\nu_5(Q_i),\qquad
 \varepsilon_i=\delta(Q_i)-(1-10p_i),
\]
and write the pentagon-free non-$B$ components as $R_1,\ldots,R_s$, with
\[
 d_j=\delta(R_j)\in\{1,\ldots,6\}.
\]
Then $\sum_i p_i=k$, and equation~\eqref{eq:budget} becomes
\begin{equation}\label{eq:budget-expanded}
 E=(c-1)+\sum_{i=1}^c\varepsilon_i+\sum_{j=1}^s d_j.
\end{equation}
In particular all indices appearing below are at most six.

The normalized moments are additive over disjoint unions, and every $B$-component contributes zero to every $\Psi_j$. Hence Theorem~\ref{thm:local} and Lemma~\ref{lem:finitecomponents} give
\begin{equation}\label{eq:global-penalty}
 \Psi_2(H)
 \le 110k-
 \left(\sum_{i=1}^c c_{\varepsilon_i}
       +\sum_{j=1}^s b_{d_j}\right).
\end{equation}
Now merge the positive-packing penalties successively using~\eqref{eq:merge}. After all $c$ terms have been merged, their total penalty is strictly larger than
\[
 c_A,\qquad
 A=(c-1)+\sum_{i=1}^c\varepsilon_i,
\]
unless $c=1$. Each intermediate index is at most $A\le E\le6$. Next absorb the pentagon-free defect terms one at a time using~\eqref{eq:absorb}. By~\eqref{eq:budget-expanded}, the final index is exactly $E$. Consequently
\begin{equation}\label{eq:penalty-condensation}
 \sum_{i=1}^c c_{\varepsilon_i}+\sum_{j=1}^s b_{d_j}
 \ge c_E,
\end{equation}
with strict inequality whenever $c>1$ or $s>0$.

Consider the globally admissible comparison remainder
\[
 H_*=q_*B\cup C_{k,E},
 \qquad q_*:=\frac{N-v(C_{k,E})}{7}.
\]
The congruence argument in the proof of Theorem~\ref{thm:kernel} shows that $q_*$ is a nonnegative integer, and Section~\ref{sec:constructions} proves that $K_2\vee H_*$ is 1-planar and $tC_5$-free. By Theorem~\ref{thm:local},
\[
 \Psi_2(H_*)=\Psi_2(C_{k,E})=110k-c_E.
\]
Thus~\eqref{eq:global-penalty}--\eqref{eq:penalty-condensation} imply
\[
 \Psi_2(H)<\Psi_2(H_*)
\]
whenever $c>1$ or $s>0$. Since $H_*$ is independently admissible, the comparison is between the complete remainders $H$ and $H_*$.

The strict second-moment gap also gives a strict spectral gap in the present range of $n$. Before condensation, Theorem~\ref{thm:kernel} bounds the total non-$B$ part by $5k+70$ vertices; the core of $H_*$ satisfies the same bound. Hence, after the $B$-components cancel, for the two exceptional parts $F$ and $C_{k,E}$ and every $j\ge1$,
\[
 |\Psi_j(F)-\Psi_j(C_{k,E})|\le K6^j,
 \qquad K=28(5k+70)=140(k+14).
\]
The first strict gap above occurs at $j=2$ and is at least $14$, while the remaining normalized resolvent tail is bounded geometrically. The definition of $N_0(t)$ and $\rho>\sqrt{2n}$ make that tail smaller than the leading gap. Equation~\eqref{eq:resolvent} therefore gives
\[
 \rho(K_2\vee H_*)>\rho(K_2\vee H),
\]
contradicting extremality. Hence necessarily $c=1$ and $s=0$.

Let $Q=Q_1$. Equation~\eqref{eq:budget-expanded} now gives $\varepsilon_1=E$, so
\[
 H=qB\cup Q,\qquad \nu_5(Q)=k,\qquad \delta(Q)=1-10k+E.
\]
The size estimate in Theorem~\ref{thm:defect} yields
\[
 v(Q)\le5k+10+3E\le5k+28.
\]
Thus $Q$ is the only non-$B$ component and carries all packing and all nonzero defect, as claimed.
\end{proof}

\begin{proof}[Proof of Theorem~\ref{thm:main}]
By Theorem~\ref{thm:structure}, every extremal remainder is $qB\cup Q$, where $Q$ has packing number $k$ and excess $E$. Theorem~\ref{thm:local} determines the normalized moment sequence $(\Psi_2,\ldots,\Psi_5)$ uniquely: it is lexicographically maximized by $C_{k,E}$, with the first strict difference occurring by moment five.

To transfer this finite moment comparison to the actual spectral root, let $F_1,F_2$ be two possible cores. Theorem~\ref{thm:structure} gives $v(F_i)\le5k+28$, and the coarser bound $5k+70$ is enough for the uniform estimate
\[
 |\Psi_j(F_1)-\Psi_j(F_2)|\le K6^j,
 \qquad K=140(k+14).
\]
If $\ell\le5$ is the first strict moment index, its integer gap is at least one. The rest of the normalized resolvent series has absolute value at most
\[
 \frac{K6^{\ell+1}}{x^{\ell+2}(1-6/x)},
\]
which is smaller than $x^{-\ell-1}$ whenever
$x>6+K6^6=6+6531840(k+14)$.
The threshold on $n$ and $\rho>\sqrt{2n}$ guarantee this inequality. Hence a core that loses in normalized moment order also loses at the actual spectral root, by~\eqref{eq:resolvent}.

The congruence makes $(N-v(C_{k,E}))/7$ an integer, and the lower bound on $n$ makes it nonnegative. The constructions in Section~\ref{sec:constructions} prove that the stated graph is 1-planar and $tC_5$-free. Equality in the local classification is graph isomorphism, so the extremizer is unique.
\end{proof}

\section{Computational verification and reproducibility}\label{sec:repro}
Lemma~\ref{lem:finitecomponents} is verified by exact finite enumeration. After Theorems~\ref{thm:cover} and~\ref{thm:defect} reduce the problem to defect at most six, the possible orders are finite and every candidate is represented by a fixed-budget edge-deletion set in an explicit envelope $A_m$. Three exhaustive implementations return the same deletion sets and exact moment values, and a separate five-subset/permutation enumerator verifies all pentagons in the relevant $A_m$ and $W_m$. The covering theorem, the packing--defect theorem, and the positive-packing classification are proved analytically.

The supplementary material records the exact search ranges, algorithms, certificates, independent checks, and reproduction commands.

\section{Concluding remarks}
Theorem~\ref{thm:main} resolves the fixed-$t$ disjoint-$C_5$ problem posed in~\cite[Problem~1]{LWZ}. The proof also yields two structural results that do not depend on the final residue-class classification: Theorem~\ref{thm:cover} restricts every connected 1-planar $K_2$-join remainder to two path-square envelopes, and Theorem~\ref{thm:defect} converts bounded pentagon packing into a sharp linear defect bound. The latter gives the exact edge extremum in Corollary~\ref{cor:edge}.

For the spectral problem, the zero-defect block $B=K_1\vee2K_3$ accounts for the unbounded part of an extremizer. The normalized resolvent removes these repeated blocks exactly, leaving only a bounded core. The strict moment inequalities in Section~\ref{sec:condensation} then force the packing and the nonzero defect into that single core. The quantitative threshold on $n$ is not optimized; it is chosen only to make the reduction and resolvent-tail estimates explicit. It would be natural to ask whether other fixed cycle packings in 1-planar graphs admit analogous sharp defect laws and finite-core reductions.

\section*{Data and code availability}
All code, exact certificates, and reproduction scripts used for Lemma~\ref{lem:finitecomponents} and the supplementary checks are supplied in a separate supplementary code archive. The archive includes a SHA-256 manifest and a one-command verification runner; the verification supplement records the exact search ranges and reproduction commands.

\section*{Declaration of generative AI and AI-assisted technologies in the manuscript preparation process}
ChatGPT (OpenAI) was used during manuscript preparation for language editing and limited technical assistance. The author reviewed and takes full responsibility for the final manuscript.

\end{document}